\providecommand{\U}[1]{\protect\rule{.1in}{.1in}}
\newtheorem{theorem}{Theorem}
\newtheorem{result}[theorem]{Result}
\newenvironment{proof}[1][Proof]{\noindent\textbf{#1.} }{\ \rule{0.5em}{0.5em}}
\begin{document}

\title{Asymptotic analysis of a fluid model modulated by an $M/M/1$ queue}
\author{Charles Knessl\\Department of Mathematics, Statistics and Computer Science \\University of Illinois at Chicago (M/C 249) \\851 South Morgan Street \\Chicago, IL 60607-7045\\knessl@uic.edu
\and Diego Dominici\\Department of Mathematics\\State University of New York at New Paltz\\1 Hawk Dr. Suite 9\\New Paltz, NY 12561-2443\\dominicd@newpaltz.edu}
\maketitle

\begin{abstract}
We analyze asymptotically a differential-difference equation, that arises in a
Markov-modulated fluid model. We use singular perturbation methods to analyze
the problem with appropriate scalings of the two
state variables. In particular, the ray method and asymptotic matching are used.

\end{abstract}

Keywords: Fluid models, M/M/1 queue, differential-difference equations, ray method, asymptotics.

\section{Introduction}

Fluid models have received much recent attention in the literature. They have
been used to model statistical multiplexers in ATM (asynchronous transfer
mode) networks \cite{Ren}, \cite{MR95j:60151}, \cite{tanaka}, packet speech
multiplexers \cite{tucker}, buffer storage in manufacturing models
\cite{wijngaard}, buffer memory in store-and-forward systems \cite{hashida}
and high-speed digital communication networks \cite{MR1295412}. In these
models the queue length is considered a continuous (or \textquotedblleft
fluid\textquotedblright) process, rather than a discrete random process that
measures the number of customers. These models tend to be somewhat easier to
analyze, as they allow for less randomness than more traditional queueing models.

The following is a description of a fairly general fluid model, of which many
variants and special cases have been considered. Let $X(t)$ denote the amount
of fluid at time $t$ in the buffer. Furthermore, let $Z(t)$ be a
continuous-time Markov process. The content of the buffer $X(t)$ is regulated
(or driven) by $Z(t)$ in such a way that the \textit{net input rate} into the
buffer (i.e., the rate of change of its content) is $\eta\left[  Z(t)\right]
$. The function $\eta\left(  \cdot\right)  $ is called the \textit{drift
function}. When the buffer capacity is infinite, the dynamics of $X(t)$ are
given by%
\begin{equation}
\frac{dX}{dt}=\left\{
\begin{array}
[c]{c}%
\eta\left[  Z(t)\right]  ,\quad X(t)>0\\
\max\left\{  \eta\left[  Z(t)\right]  ,0\right\}  ,\quad X(t)=0
\end{array}
\right.  . \label{fluid infinity}%
\end{equation}
The condition at $X(t)=0$ ensures that the process $X(t)$ does not become
negative. When the buffer capacity is finite, say $B,$ the dynamics are given
by%
\[
\frac{dX}{dt}=\left\{
\begin{array}
[c]{c}%
\eta\left[  Z(t)\right]  ,\quad0<X(t)<B\\
\max\left\{  \eta\left[  Z(t)\right]  ,0\right\}  ,\quad X(t)=0\\
\min\left\{  \eta\left[  Z(t)\right]  ,0\right\}  ,\quad X(t)=B
\end{array}
\right.  .
\]
The condition at $X(t)=B$ prevents the buffer content from exceeding $B.$

In many applications, the process $Z(t)$ evolves as a finite or infinite state
\textit{birth-death process. }The description of the motion of $Z(t)$ is as
follows: the process sojourns in a given state $k$ for a random length of
time, whose distribution is exponential with parameter $\lambda_{k}+\mu_{k}.$
When leaving state $k,$ the process enters either state $k+1$ or state $k-1$
with probabilities%

\[
k\rightarrow k+1\quad w.p.\quad\frac{\lambda_{k}}{\lambda_{k}+\mu_{k}},\quad
k\in\mathcal{N}%
\]

\[
k\rightarrow k-1\quad w.p.\quad\frac{\mu_{k}}{\lambda_{k}+\mu_{k}},\quad
k\in\mathcal{N}.
\]

The motion is analogous to that of a random walk, except that transitions
occur at random rather than fixed times. The parameters $\lambda_{k}$ and
$\mu_{k}$ are called, respectively, the \textit{birth }and \textit{death
}rates. We shall assume that the birth and death rates are positive with the
exception of the death rate $\mu_{0}$ in the lowest state and (in case of a
finite space ${\mathcal{N}}=\{0,1,\ldots,N \})$ the birth rate $\lambda_{N}$
in the highest state, which are equal to zero. Also, it will be convenient to
interpret $\lambda_{k}$ and $\mu_{k}$ as zero if $k\notin\mathcal{N}.$

If the buffer has emptied at time $t,$ it remains empty as long as the drift
is negative. We let $\eta\left[  Z\left(  t\right)  \right]  =r_{k},$ given
that $Z(t)$ is in state $k.$ We shall assume throughout that $r_{k}\neq0$ for
all states. We shall also assume that $r_{k}>0$ for at least one
$k\in\mathcal{N}$, since otherwise, in the steady state, the buffer is always empty.

We let%
\[
\pi_{k}=%
%TCIMACRO{\dprod \limits_{j=0}^{k-1}}%
%BeginExpansion
{\displaystyle\prod\limits_{j=0}^{k-1}}
%EndExpansion
\frac{\lambda_{j}}{\mu_{j+1}},\quad k\in\mathcal{N}%
\]
where an empty product should be interpreted as unity. The stationary
probabilities $p_{k}$ of the birth-death process can then be represented as%
\[
p_{k}=\frac{\pi_{k}}{%
%TCIMACRO{\dsum \limits_{j\in\mathcal{N}}}%
%BeginExpansion
{\displaystyle\sum\limits_{j\in\mathcal{N}}}
%EndExpansion
\pi_{j}},\quad k\in\mathcal{N}.
\]

When the capacity of the buffer is infinitely large, in order that a
stationary distribution for $X(t)$ exists, the mean drift $%
%TCIMACRO{\dsum \nolimits_{k\in\mathcal{N}}}%
%BeginExpansion
{\displaystyle\sum\nolimits_{k\in\mathcal{N}}}
%EndExpansion
p_{k}r_{k}$ should be negative or, equivalently, the following
\textit{stability condition }should be satisfied
\begin{equation}%
%TCIMACRO{\dsum \limits_{k\in\mathcal{N}}}%
%BeginExpansion
{\displaystyle\sum\limits_{k\in\mathcal{N}}}
%EndExpansion
\pi_{k}r_{k}<0. \label{2.3}%
\end{equation}
We let
\begin{align*}
{\mathcal{N}}^{\ +}  &  =\left\{  k\in{\mathcal{N}}\mid r_{k}>0\right\}
,\quad{\mathcal{N}}^{\ -}=\left\{  k\in{\mathcal{N}}\mid r_{k}<0\right\}  ,\\
N_{+}  &  =\left\vert \mathcal{N}^{\ +}\right\vert ,\quad N_{-}=\left\vert
\mathcal{N}^{\ -}\right\vert
\end{align*}
and since we assume that the drift in each state is nonzero, we have
$\mathcal{N}^{\ +}\cup\mathcal{N}^{\ -}=\mathcal{N}.$

Setting
\[
P_{k}(t,x)=\Pr\left[  X(t)\leq x,\ Z(t)=k\right]  ;\quad t,\ x\geq0,\quad
k\in\mathcal{N,}%
\]
the Kolmogorov forward equations for the Markov process $\left[
X(t),Z(t)\right]  $ are given by%
\[
\frac{\partial P_{k}}{\partial t}+r_{k}\frac{\partial P_{k}}{\partial
x}=\lambda_{k-1}P_{k-1}+\mu_{k+1}P_{k+1}-\left(  \lambda_{k}+\mu_{k}\right)
P_{k},\quad k\in\mathcal{N}.
\]
For the stationary distribution
\[
F_{k}(x)\equiv{\lim}_{t\rightarrow\infty}P_{k}(t,x)
\]
we have%
\begin{equation}
r_{k}F_{k}^{\prime}=\lambda_{k-1}F_{k-1}+\mu_{k+1}F_{k+1}-\left(  \lambda
_{k}+\mu_{k}\right)  F_{k},\quad k\in\mathcal{N}. \label{2.7}%
\end{equation}
Since the buffer content is increasing whenever the drift is positive, the
solution to (\ref{2.7}) must satisfy the boundary conditions%
\begin{equation}
F_{k}(0)=0,\quad k\in\mathcal{N}^{\ +}. \label{2.8}%
\end{equation}
This means that there is no probability mass at $x=0$ if the drift takes you
away from the boundary. Also, we must have%
\begin{equation}
F_{k}(\infty)=p_{k},\quad k\in\mathcal{N}, \label{2.9}%
\end{equation}
as this is the marginal distribution of the regulating process $Z(t)$. In the
finite capacity case we have the additional boundary condition%
\begin{equation}
F_{k}(B)=p_{k},\quad k\in\mathcal{N}^{\ -}. \label{2.10}%
\end{equation}
This means there is no probability mass at $x=B$ if the drift moves the
process from this boundary. The values of $F_{k}(0)$ for $k\in\mathcal{N}%
^{\ -}$, and of $F_{k}(B)$ for $k\in\mathcal{N}^{\ +}$, are not a priori
known. The \textquotedblleft half" boundary conditions (\ref{2.8}) and
(\ref{2.10}) make these problems difficult.

The purpose of this paper is to continue our asymptotic analysis of fluid
models using the ray method \cite{MR80g:35002}, which we successfully applied
in \cite{MR2117327} to the model first studied by Anick, Mitra and Sondhi in
\cite{MR84a:68020}.

The paper is organized as follows. In Section 2 we state the basic equations.
In Sections 3-7 we analyze these in various ranges of the state space
(\ref{SS}). In Section 8\ we study the marginal distribution. Finally, in
Section 9 we summarize and interpret the results.

\section{Problem statement}

Let ${\mathcal{N=}}\left\{  0,1,2,\ldots\right\}  $, and the parameters
$\lambda_{k}$ and $\mu_{k}$ be constant,
\[
\lambda_{k}=\lambda,\quad\mu_{k}=\left\{
\begin{array}
[c]{c}%
\mu,\quad1\leq k\\
0,\quad k=0
\end{array}
\right.  ,\quad\quad\rho=\frac{\lambda}{\mu}<1.
\]
The drift is taken as $r_{k}=k-c,$ where $c$ represents the output rate of the
buffer. We assume $c$ to be a positive non-integer number. This model
corresponds to a fluid model modulated by the standard $M/M/1$ queue.

The forward Kolmogorov equations for $F_{k}(x)$ are then%
\begin{equation}
(k-c)F_{k}^{\prime}(x)=\lambda F_{k-1}(x)+\mu F_{k+1}(x)-\left(  \lambda
+\mu\right)  F_{k}(x),\quad0\leq k \label{diffeq}%
\end{equation}%
\begin{equation}
\mu F_{0}(x)=\lambda F_{-1}(x). \label{BC0}%
\end{equation}
with boundary conditions%
\begin{equation}
F_{k}(0)=0,\quad\left\lfloor c\right\rfloor +1\leq k. \label{FBC}%
\end{equation}
and limiting distribution%
\begin{equation}
F_{k}\left(  \infty\right)  =\left(  1-\rho\right)  \rho^{k},\quad0\leq k.
\label{Finf}%
\end{equation}
Here (\ref{BC0}) defines $F_{-1}(x)$ and this condition is equivalent to%
\[
-cF_{0}^{\prime}(x)=\mu F_{1}(x)-\lambda F_{0}(x).
\]
Since the buffer capacity is infinite, we need the stability condition%
\begin{equation}
\frac{\rho}{1-\rho}<c,\quad\text{or\quad}\rho<1-\frac{1}{c+1}.
\label{stability}%
\end{equation}

A related model, with $r_{0}=\rho_{0}<0$ and $r_{k}=\rho>0$ was studied in
\cite{MR95a:60138}, \cite{MR2003c:60147} and \cite{MR97j:60165} where a
spectral representation of the solution was obtained. The same model was
analyzed in \cite{MR2003i:60165} using continued fractions. The general case,
with arbitrary $r_{k},\mu_{k}$ and $\lambda_{k}$, was studied in
\cite{MR2002h:60210} and \cite{MR2041913} using a family of orthogonal
polynomials. The fluid queue driven by a general Markovian process was
analyzed in \cite{MR2002f:60182}. A numerical method was presented in
\cite{MR99i:60170}. The full transient solution was considered in
\cite{MR2156569}.

We shall analyze this model directly by using the differential-difference
equation (\ref{diffeq}) satisfied by $F_{k}(x)$. After appropriate scalings of
$k$ and $x,$ we analyze this equation asymptotically for $c\rightarrow\infty,$
using singular perturbation methods. We also carefully treat various boundary
and corner regions of the state space
\begin{equation}
\left\{  \left(  x,k\right)  :x\geq0,\quad0\leq k\right\}  , \label{SS}%
\end{equation}
and indeed we show that their analysis is needed in order to obtain the
asymptotic expansions away from the boundaries.

\section{The ray expansion}

To analyze the problem (\ref{diffeq})-(\ref{Finf}) for large \ $c$ \ we
introduce the scaled variables $y$ and $z,$ with
\[
x=c^{2}y,\quad k=cz,\quad z,y=O(1).
\]
We define the function \ $G(y,z)$ \ and the small parameter \ $\varepsilon$
\ by \ \
\[
\varepsilon=c^{-1},\quad F_{k}(x)=G\left(  x\varepsilon^{2},k\varepsilon
\right)  =G(y,z)
\]
and note that \ $F_{k\pm1}(x)=G(y,z\pm\varepsilon).$

Then (\ref{diffeq}) becomes the following equation for\ $G(y,z)$\
\begin{equation}
\varepsilon(z-1)\frac{\partial G}{\partial y}(y,z)=\lambda G(y,z-\varepsilon
)+\mu G(y,z+\varepsilon)-\left(  \lambda+\mu\right)  G(y,z) \label{eqG}%
\end{equation}
and (\ref{FBC}) implies that%
\begin{equation}
G(0,z)=0,\quad1<z. \label{BC}%
\end{equation}
Also, from (\ref{Finf}), we have
\begin{equation}
F_{k}(\infty)=G(\infty,z)=\left(  1-\rho\right)  \exp\left[  \frac
{1}{\varepsilon}z\ln\left(  \rho\right)  \right]  ,\quad0<z. \label{Ginfinity}%
\end{equation}

To find \ $G(y,z)$ for\ \ $\varepsilon$ \ small, we shall use the ray method.
Thus, we consider solutions which have the asymptotic form
\begin{equation}
G(y,z)\sim\varepsilon^{\nu}\exp\left[  \frac{1}{\varepsilon}\Psi(y,z)\right]
\mathbb{K}(y,z), \label{GRay}%
\end{equation}
where $\nu$ is a constant that must be determined. Using\ (\ref{GRay}) in
(\ref{eqG}), with
\[
\frac{1}{\varepsilon}\Psi(y,z\pm\varepsilon)=\frac{1}{\varepsilon}\Psi\pm
\Psi_{z}+\frac{1}{2}\Psi_{zz}\varepsilon+O\left(  \varepsilon^{2}\right)  ,
\]
dividing by \ $\exp\left[  \frac{1}{\varepsilon}\Psi(y,z)\right]  ,$ and
expanding in powers of \ $\varepsilon$ \ we obtain the \emph{eikonal equation}
for \ $\Psi(y,z)$%
\begin{equation}
\mu\left(  1-e^{q}\right)  +\lambda\left(  1-e^{-q}\right)  +\left(
z-1\right)  p=0, \label{eik}%
\end{equation}
and the \emph{transport equation} for $\mathbb{K}(y,z)$%
\begin{equation}
\left(  \mu e^{q}-\lambda e^{-q}\right)  \frac{\partial\mathbb{K}}{\partial
z}+\left(  1-z\right)  \frac{\partial\mathbb{K}}{\partial y}+\frac{1}{2}%
\frac{\partial q}{\partial z}\left(  \mu e^{q}+\lambda e^{-q}\right)
\mathbb{K}=0, \label{trans}%
\end{equation}
where%
\[
p=\frac{\partial\Psi}{\partial y},\quad q=\frac{\partial\Psi}{\partial z}.
\]
To solve (\ref{eik}) and (\ref{trans}) we use the method of characteristics,
which we briefly review below.

Given the first order partial differential equation%
\[
\mathfrak{F}\left(  y,z,\Psi,p,q\right)  =0,
\]
where \ $p=\Psi_{y},\quad q=\Psi_{z},$ we search for a solution \ $\Psi(y,z).$
The technique is to solve the system of \textquotedblleft characteristic
equations\textquotedblright\ given by%
\begin{align*}
\dot{y}  &  =\frac{\partial y}{\partial t}=\mathfrak{F}_{p},\quad\dot
{z}=\mathfrak{F}_{q}\\
\dot{p}  &  =-\mathfrak{F}_{y}-p\mathfrak{F}_{\Psi},\quad\dot{q}%
=-\mathfrak{F}_{z}-q\mathfrak{F}_{\Psi}\\
\dot{\psi}  &  =p\mathfrak{F}_{p}+q\mathfrak{F}_{q}%
\end{align*}
where we now consider $\left\{  y,z,\psi,p,q\right\}  $ to all be functions of
the variables $s$ and $t,$ with $\psi(s,t)=\Psi(y,z).$ Here $t$ measures how
far we are along a particular characteristic curve or ray and $s$ indexes them.

For the eikonal equation (\ref{eik}), the characteristic equations are
\begin{subequations}
\label{strip}%
\begin{align}
\dot{y}  &  =z-1\label{eqa}\\
\quad\dot{z}  &  =\lambda e^{-q}-\mu e^{q}\label{eqb}\\
\dot{p}  &  =0\label{eqc}\\
\quad\dot{q}  &  =-p\label{eqd}\\
\dot{\psi}  &  =p(z-1)+q\left(  \lambda e^{-q}-\mu e^{q}\right)  . \label{eqe}%
\end{align}
The particular solution is determined by the initial conditions at $t=0$. We
shall show that for this problem two different types of solutions are needed;
these correspond to two distinct families of rays.

Setting \ $\left.  \Psi_{y}\right\vert _{t=0}=s,\ \left.  \Psi_{z}\right\vert
_{t=0}=B$ \ and solving (\ref{eqc})-(\ref{eqd}), yields%
\end{subequations}
\begin{equation}
p=s,\quad q=B-st \label{pq}%
\end{equation}
so that $\Psi_{y}$ is constant along a ray.

\subsection{The rays from $(0,1)$}

We now consider the family of rays emanating from the point \ $y=0,\ z=1.$
\ Evaluating (\ref{eik}) at $t=0$ we get%
\[
\mu\left(  1-e^{B}\right)  +\lambda\left(  1-e^{-B}\right)  =0
\]
so that%
\begin{equation}
B=0\ \quad\text{or}\ \quad B=\ln\left(  \rho\right)  . \label{u0}%
\end{equation}
From (\ref{eqb}) and (\ref{pq}), with the initial condition \ $z(s,0)=1$ and
using (\ref{u0}), \ we obtain
\begin{equation}
z=\frac{1}{s}\left[  \lambda e^{-B}\left(  e^{st}-1\right)  +\mu e^{B}\left(
e^{-st}-1\right)  \right]  +1. \label{z(u)}%
\end{equation}

From (\ref{eqa}), we have \
\[
\dot{y}(s,0)=z(s,0)-1=0
\]
and%
\[
\ddot{y}(s,0)=\dot{z}(s,0)=\lambda e^{-B}-\mu e^{B}.
\]
From (\ref{u0}) we have%
\[
\ddot{y}(s,0)=\left\{
\begin{array}
[c]{c}%
\lambda-\mu<0,\quad B=0\\
\mu-\lambda>0,\quad B=\ln\left(  \rho\right)
\end{array}
\right.  .
\]
Using the initial condition \ $y(s,0)=0$ \ and expanding in powers of \ $t$,
\ we get%
\[
y(s,t)\sim\ddot{y}(s,0)\frac{t^{2}}{2},\quad t\rightarrow0
\]
and in order to have \ $y>0$ \ for $t>0$ (i.e., for the rays to enter the
domain $[0,\infty)\times\lbrack0,\infty)$) we need to choose
\begin{equation}
B=\ln\left(  \rho\right)  \label{U0}%
\end{equation}
with $\ B<0$ since $\rho<1$.

Integrating (\ref{eqa}) and using (\ref{z(u)}) and (\ref{U0}), we conclude
that%
\begin{equation}
y(s,t)=\frac{1}{s^{2}}\left[  \mu\left(  e^{st}-st-1\right)  +\lambda\left(
1-st-e^{-st}\right)  \right]  \label{y(s,t)}%
\end{equation}%
\begin{equation}
z(s,t)=\frac{1}{s}\left[  \mu\left(  e^{st}-1\right)  +\lambda\left(
e^{-st}-1\right)  \right]  +1. \label{z(s,t)}%
\end{equation}
This yields the rays that emanate from $(0,1)$ in parametric form. Several
rays are sketched in Figure \ref{raycor}.

\begin{figure}[t]
\begin{center}
\rotatebox{270} {\resizebox{10cm}{!}{\includegraphics{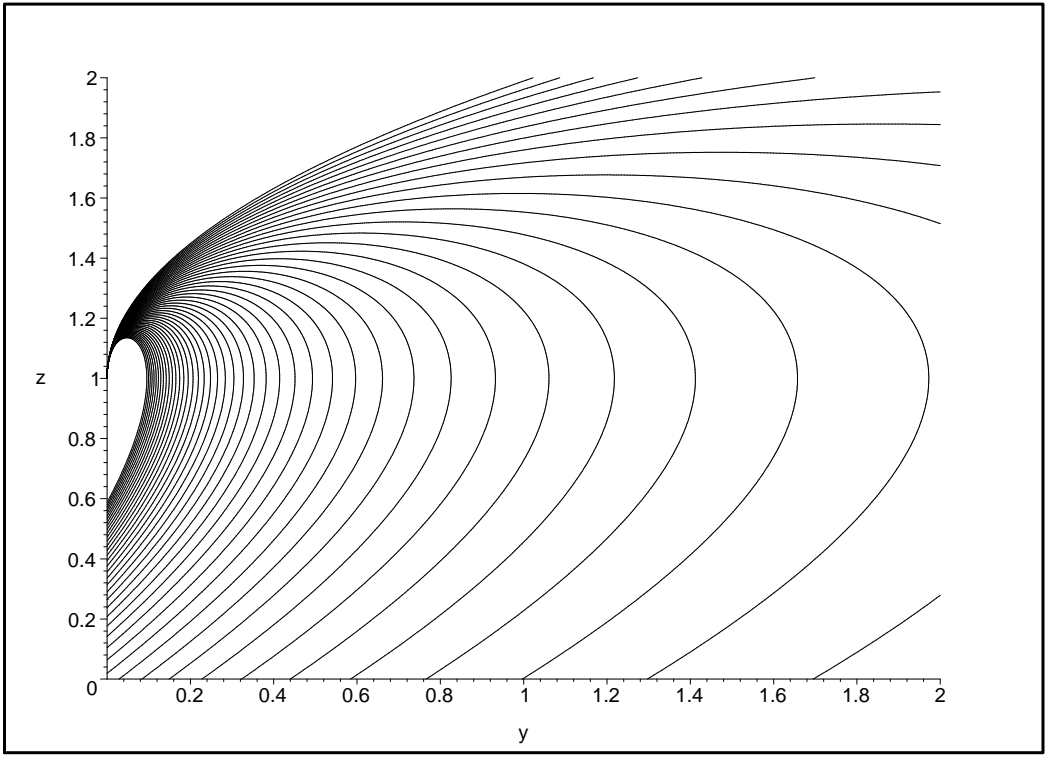}}}
\end{center}
\caption{A sketch of the rays from $(0,1)$.}%
\label{raycor}%
\end{figure}

For \ $t\geq0$ \ and each value of \ $s$, \ (\ref{y(s,t)}) and (\ref{z(s,t)})
determine a ray in the\ $(y,z)$ plane, which starts from \ $(0,1)$ \ at
\ $t=0$. \ We discuss a particular ray which can be obtained in an explicit
form. For \ $s=0$ \ we can eliminate $\ t$ \ from (\ref{z(s,t)}) and obtain%
\begin{equation}
y=Y_{0}(z):=\frac{\left(  z-1\right)  ^{2}}{2\left(  \mu-\lambda\right)
},\quad s=0,\quad1\leq z, \label{Y0}%
\end{equation}
and along this ray, $t$ and $z$ are related by%
\begin{equation}
t\left(  Y_{0},z\right)  =T_{0}(z)=\frac{z-1}{\mu-\lambda},\quad
s=0,\quad1\leq z. \label{T0}%
\end{equation}

For \ $s>0$, \ we have both \ $y(s,t)$ \ and \ $z(s,t)$ \ increasing for
\ $t>0$. \ For \ $s<0$ \ the rays reach a maximum value in \ $z$ \ at
\ $t=T_{1}$, where%

\[
T_{1}(s)=\frac{1}{2s}\ln\left(  \rho\right)  ,\quad s<0
\]
and we have%
\begin{equation}
y(s,T_{1})=\frac{1}{s^{2}}\left[  \lambda-\mu-\frac{1}{2}\left(  \lambda
+\mu\right)  \ln\left(  \rho\right)  \right]  \label{ytmax}%
\end{equation}%
\begin{equation}
z(s,T_{1})=\frac{1}{s}\left[  2\sqrt{\lambda\mu}-\left(  \lambda+\mu\right)
\right]  +1. \label{ztmax}%
\end{equation}

\ From (\ref{eqa}) we see that the maximum value in $y$ is achieved at the
same time that \ $z=1$, and that occurs at \ $t=T_{2}$ \ with%
\begin{equation}
T_{2}(s)=\frac{1}{s}\ln\left(  \rho\right)  ,\quad s<0 \label{Tg}%
\end{equation}
and
\[
y(s,T_{2})=\frac{1}{s^{2}}\left[  2\left(  \lambda-\mu\right)  -\left(
\lambda+\mu\right)  \ln\left(  \rho\right)  \right]  .
\]

Inverting the equations (\ref{y(s,t)})-(\ref{z(s,t)}) we can write%
\[
s=S\left(  y,z\right)  ,\quad t=T(y,z)
\]
and%
\[
\Psi(y,z)=\psi\left[  S\left(  y,z\right)  ,T(y,z)\right]  ,\quad
\mathbb{K(}y,z\mathbb{)}=K\left[  S\left(  y,z\right)  ,T(y,z)\right]  .
\]
We will use this notation in the rest of the article.

\subsection{The functions $\Psi$ and $\mathbb{K}$}

From (\ref{eqe}) we have%
\[
\dot{\psi}=\mu e^{st}\left[  1+\ln\left(  \rho\right)  -ts\right]  +\lambda
e^{-st}\left[  1-\ln\left(  \rho\right)  +ts\right]  ,
\]
which we can integrate to get%
\begin{align}
\psi(s,t)  &  =\frac{\mu}{s}e^{st}\left[  2+\ln\left(  \rho\right)
-ts\right]  -\frac{\lambda}{s}e^{-st}\left[  2-\ln\left(  \rho\right)
+ts\right] \label{psi1}\\
&  +\psi(s,0)-\frac{\mu}{s}\left[  2+\ln\left(  \rho\right)  \right]
+\frac{\lambda}{s}\left[  2-\ln\left(  \rho\right)  \right]  .\nonumber
\end{align}
Obviously, $\psi(s,0)\equiv\psi_{0}$ is a constant, since all rays start at
the same point. Setting $s=0$ in (\ref{psi1}) and using (\ref{T0}), we obtain%
\[
\psi(0,t)=\psi_{0}+\left(  z-1\right)  \ln\left(  \rho\right)
\]
and therefore, taking the limit as $t\rightarrow\infty,$ we get%
\[
\Psi(\infty,z)=\psi_{0}+\left(  z-1\right)  \ln\left(  \rho\right)  .
\]
On the other hand, from (\ref{Ginfinity}) we have%
\[
\Psi(\infty,z)=z\ln\left(  \rho\right)
\]
and we conclude that%
\[
\psi_{0}=\ln\left(  \rho\right)  .
\]
Solving for $e^{st}$ in (\ref{y(s,t)})-(\ref{z(s,t)}), we get%
\begin{equation}
e^{st}=1+\frac{s}{2\mu}\left[  z-1+ys+t\left(  \lambda+\mu\right)  \right]
,\quad e^{-st}=1+\frac{s}{2\lambda}\left[  z-1-ys-t\left(  \lambda+\mu\right)
\right]  . \label{expA}%
\end{equation}
Replacing (\ref{expA}) in (\ref{psi1}), we obtain%
\begin{equation}
\psi=2ys+\left[  \ln\left(  \rho\right)  -st\right]  \left(  z-1\right)
+\ln\left(  \rho\right)  . \label{psi2}%
\end{equation}

We shall now solve the transport equation (\ref{trans}), which we rewrite as%
\begin{equation}
\left(  z-1\right)  \frac{\partial\mathbb{K}}{\partial y}+\left(  \lambda
e^{-q}-\mu e^{q}\right)  \frac{\partial\mathbb{K}}{\partial z}=\frac{1}%
{2}\frac{\partial q}{\partial z}\left(  \mu e^{q}+\lambda e^{-q}\right)
\mathbb{K}. \label{transp1}%
\end{equation}
Using (\ref{y(s,t)}) and (\ref{z(s,t)}) in (\ref{transp1}), we have%
\begin{equation}
\frac{\partial K}{\partial t}=\frac{1}{2}\frac{\partial q}{\partial z}\left(
\mu e^{q}+\lambda e^{-q}\right)  K. \label{transp2}%
\end{equation}
To solve (\ref{transp2}), we need to compute $\frac{\partial q}{\partial z}$
as a function of $s$ and $t.$ Use of the chain rule gives%
\[%
\begin{bmatrix}
\frac{\partial y}{\partial t} & \frac{\partial y}{\partial s}\\
\frac{\partial z}{\partial t} & \frac{\partial z}{\partial s}%
\end{bmatrix}%
\begin{bmatrix}
\frac{\partial t}{\partial y} & \frac{\partial t}{\partial z}\\
\frac{\partial s}{\partial y} & \frac{\partial s}{\partial z}%
\end{bmatrix}
=%
\begin{bmatrix}
1 & 0\\
0 & 1
\end{bmatrix}
\]
and hence,%
\begin{equation}%
\begin{bmatrix}
\frac{\partial t}{\partial y} & \frac{\partial t}{\partial z}\\
\frac{\partial s}{\partial y} & \frac{\partial s}{\partial z}%
\end{bmatrix}
=\frac{1}{\mathbf{J}}%
\begin{bmatrix}
\frac{\partial z}{\partial s} & -\frac{\partial y}{\partial s}\\
-\frac{\partial z}{\partial t} & \frac{\partial y}{\partial t}%
\end{bmatrix}
, \label{inversion}%
\end{equation}
where the Jacobian $\mathbf{J}(s,t)$ is defined by
\begin{equation}
\mathbf{J}(s,t)=\frac{\partial y}{\partial t}\frac{\partial z}{\partial
s}-\frac{\partial y}{\partial s}\frac{\partial z}{\partial t}=\frac{1}%
{s}\left[  2\frac{\partial z}{\partial t}y-\left(  z-1\right)  ^{2}\right]  .
\label{J}%
\end{equation}
Using (\ref{inversion}) we can show after some algebra that%
\begin{equation}
\frac{\partial q}{\partial z}=-\frac{2y}{\mathbf{J}}, \label{qz}%
\end{equation}
while (\ref{expA}) gives%
\[
\mu e^{q}+\lambda e^{-q}=\mu+\lambda+(z-1)s.
\]
Thus, the transport equation (\ref{transp2}) becomes
\begin{equation}
\frac{1}{K}\frac{\partial K}{\partial t}=-\frac{y}{\mathbf{J}}\left[
\mu+\lambda+(z-1)s\right]  . \label{Transp3}%
\end{equation}
Using (\ref{y(s,t)}) and (\ref{z(s,t)}) in (\ref{J}), we have%
\begin{equation}
\frac{\partial\mathbf{J}}{\partial t}=\frac{2y}{s}\frac{\partial^{2}%
z}{\partial t^{2}}=2y\left[  \mu+\lambda+(z-1)s\right]  . \label{transp4}%
\end{equation}
Combining (\ref{Transp3}) and (\ref{transp4}), we obtain%
\[
\frac{1}{K}\frac{\partial K}{\partial t}=-\frac{1}{2\mathbf{J}}\frac
{\partial\mathbf{J}}{\partial t},
\]
whose solution is%
\begin{equation}
K(s,t)=\frac{K_{0}(s)}{\sqrt{\mathbf{J(}s,t\mathbf{)}}}, \label{K}%
\end{equation}
where $K_{0}(s)$ is a function to be determined.

From (\ref{J}), we have%
\begin{equation}
\mathbf{J}(s,t)=\left(  \mu^{2}-\lambda^{2}\right)  \frac{t^{3}}{3}+O\left(
t^{4}\right)  ,\quad t\rightarrow0. \label{J2}%
\end{equation}
Since the Jacobian vanishes as $t\rightarrow0,$ the ray expansion eases to be
valid near the point $(0,1),$ where a separate analysis is needed.

So far we have determined the exponent \ $\psi(s,t)$ \ and the leading
amplitude \ $K(s,t)$ \ except for the function \ $K_{0}(s)$ in (\ref{K})\ and
the power \ $\nu$ \ in (\ref{GRay}). \ In Section\ 4 we will determine them by
matching (\ref{GRay}) to a corner layer solution valid in a neighborhood of
the point \ $(0,1)$.

\subsection{The rays from infinity}

Denoting the domain in the $(y,z)$ plane by%
\begin{equation}
\mathfrak{D=}[0,\infty)\times\lbrack0,\infty), \label{D}%
\end{equation}
we must determine what part of $\mathfrak{D}$ the rays from infinity fill. The
expansion corresponding to these rays must satisfy the boundary condition
(\ref{Ginfinity}). Thus, we have%
\begin{equation}
p(\infty,z)=p_{\infty}=0,\quad q(\infty,z)=q_{\infty}=\ln\left(  \rho\right)
, \label{uinf}%
\end{equation}
while (\ref{eqa})-(\ref{eqb}) yield equations for the rays $y_{\infty
}(t),\ z_{\infty}(t),\ $
\begin{equation}
\dot{y}_{\infty}=z_{\infty}-1,\quad\dot{z}_{\infty}=\mu-\lambda\label{yzinf}%
\end{equation}
or, eliminating $t$ from the system (\ref{yzinf}) and writing $y_{\infty
}(t)=Y_{\infty}(z)$ we get%
\begin{equation}
\frac{dY_{\infty}}{dz}=\frac{z-1}{\mu-\lambda}. \label{Y(z)}%
\end{equation}
Solving (\ref{yzinf}) subject to the initial condition $Y_{\infty}%
(z_{0})=y_{0},$ where $y_{0}\times z_{0}=0,$ we get%
\begin{equation}
Y_{\infty}(z)=y_{0}+\frac{1}{2\left(  \mu-\lambda\right)  }\left[  \left(
z-1\right)  ^{2}-\left(  z_{0}-1\right)  ^{2}\right]  . \label{Yinf}%
\end{equation}

\begin{figure}[t]
\begin{center}
\rotatebox{270} {\resizebox{10cm}{!}{\includegraphics{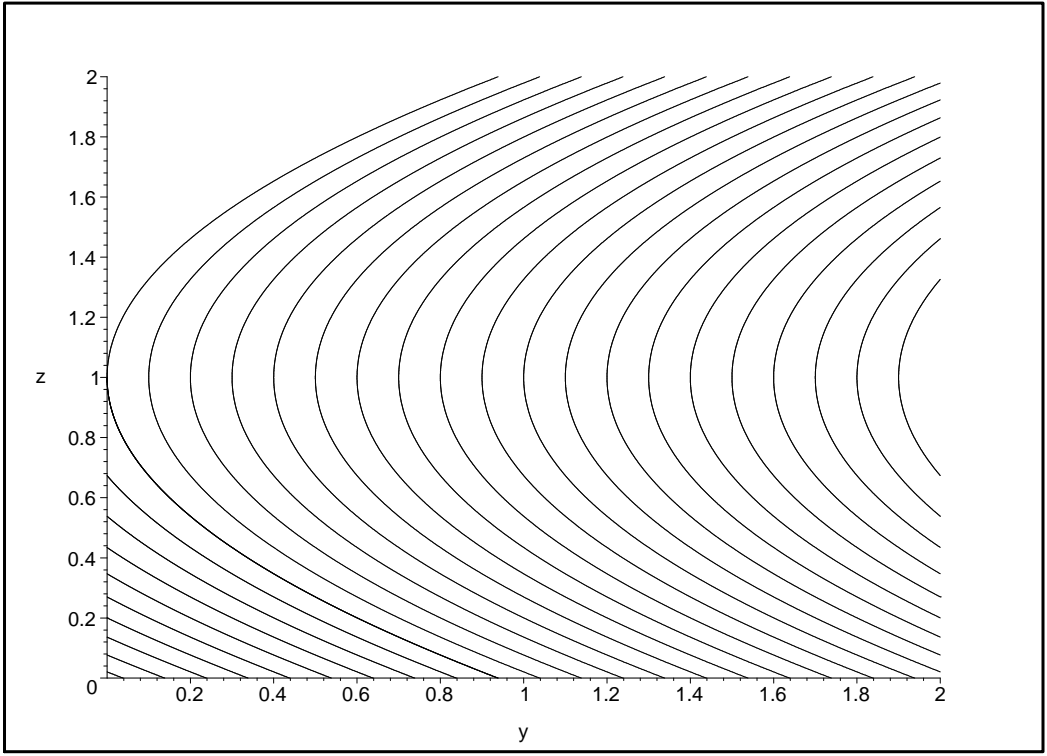}}}
\end{center}
\caption{A sketch of the rays from infinity.}%
\label{rayinf}%
\end{figure}

From (\ref{Y(z)}), it follows that the minimum value in $y$ occurs when $z=1.$
Hence, for $y$ to be positive, we must have
\[
y_{0}>\frac{\left(  z_{0}-1\right)  ^{2}}{2\left(  \mu-\lambda\right)  }%
=Y_{0}(z_{0}),
\]
where $Y_{0}(z)$ was defined in (\ref{Y0}). Therefore, the rays from infinity
fill the region given by%
\begin{equation}
R=\left\{  0\leq y,\quad0\leq z\leq1\right\}  \cup\left\{  Y_{0}(z)\leq
y,\quad1\leq z\right\}  . \label{R}%
\end{equation}
The complementary region \ $R^{C}$%
\begin{equation}
R^{C}=\left\{  0\leq y<Y_{0}(z),\quad1\leq z\right\}  , \label{RC}%
\end{equation}
is a \textit{shadow }of the rays from infinity. In $R^{C}$, \ $G$ \ is given
by (\ref{GRay}) as only the rays from $(0,1)$ are present (see Figure
\ref{rayinf}). In the region $R,$ both the rays coming from\ $(0,1)$ and the
rays coming from infinity must be taken into account. We add (\ref{GRay}) and
(\ref{Ginfinity}) to represent \ $G$ \ in the asymptotic form \
\begin{equation}
G(y,z)\sim(1-\rho)\exp\left[  \frac{1}{\varepsilon}z\ln\left(  \rho\right)
\right]  +\varepsilon^{\nu}\exp\left[  \frac{1}{\varepsilon}\Psi(y,z)\right]
\mathbb{K}(y,z),\quad(y,z)\in R. \label{GR}%
\end{equation}
We can show that $z\ln\left(  \rho\right)  >\Psi(y,z)$ in the interior of $R,$
so that $G(y,z)\sim G(\infty,z).$ However, in $R$ we can write (\ref{GR}) as
$G(y,z)-G(\infty,z)\sim\varepsilon^{\nu}\exp\left[  \frac{1}{\varepsilon}%
\Psi(y,z)\right]  \mathbb{K}(y,z).$

\section{The corner layer at \ $(0,1)$}

We determine the constant $\nu$ in (\ref{GRay}) and the function $K_{0}(s)$ in
(\ref{K}) by considering carefully the region where the rays from $(0,1)$
enter the domain $\mathfrak{D,}$ and using asymptotic matching. We introduce
the stretched variable $l,$ the function $F_{l}^{(1)}$and the parameter
$\alpha$ defined by%
\begin{align}
F_{k}(x)  &  =G_{1}(x,k-c+\alpha)=F_{l}^{(1)}(x)\nonumber\\
l  &  =k-c+\alpha,\quad-\infty<l<\infty\label{alpha}\\
\alpha &  =c-\left\lfloor c\right\rfloor ,\quad0<\alpha<1.\nonumber
\end{align}
Note that $\alpha$ is the fractional part of $c$ and $l$ takes on integer
values. Use of (\ref{alpha}) in (\ref{diffeq}) yields the equation%
\begin{equation}
(l-\alpha)\frac{dF_{l}^{(1)}}{dx}=\mu F_{l+1}^{(1)}+\lambda F_{l-1}%
^{(1)}-\left(  \lambda+\mu\right)  F_{l}^{(1)},\quad x>0,\quad l\in%
%TCIMACRO{\U{2124} }%
%BeginExpansion
\mathbb{Z}
%EndExpansion
. \label{diffcorner}%
\end{equation}

Also, (\ref{FBC}) gives the boundary condition%
\begin{equation}
F_{l}^{(1)}(0)=0,\quad l\geq1 \label{BCcorner}%
\end{equation}
and (\ref{Ginfinity}) implies that $F_{k}(\infty)=F_{l}^{(1)}(\infty)$ with%
\begin{equation}
F_{l}^{(1)}(\infty)=(1-\rho)\rho^{l-\alpha}\exp\left[  \frac{1}{\varepsilon
}\ln\left(  \rho\right)  \right]  . \label{infcorner}%
\end{equation}
For a fixed $l$ and $x\rightarrow\infty,$ we approach the interior of $R,$
where (\ref{GR}) applies. Thus, (\ref{infcorner}) is the asymptotic matching
condition between the corner layer and the solution in $R.$ We shall examine
the matching to $R^{C}$ later.

Since the problem (\ref{diffcorner})-(\ref{infcorner}) is of interest by
itself, we solve it in a slightly more general setting in the next theorem.

\begin{theorem}
\label{Th1}Suppose that the function $\Phi_{l}(x)$ satisfies the equation%
\begin{equation}
(l-\alpha)\frac{d\Phi_{l}}{dx}=A\Phi_{l+1}+C\Phi_{l-1}-B\Phi_{l},\quad
x>0,\quad l\in%
%TCIMACRO{\U{2124} }%
%BeginExpansion
\mathbb{Z}
%EndExpansion
, \label{Equ}%
\end{equation}
with
\begin{equation}
B>2\sqrt{AC}=\beta>0 \label{beta}%
\end{equation}
and the boundary conditions%
\begin{equation}
\Phi_{l}(0)=0,\quad l\geq1, \label{BCPhi}%
\end{equation}%
\begin{equation}
\Phi_{l}(\infty)=\mu_{\infty}r^{l},\quad l\in%
%TCIMACRO{\U{2124} }%
%BeginExpansion
\mathbb{Z}
%EndExpansion
, \label{infy}%
\end{equation}
where
\begin{equation}
Ar^{2}-Br+C=0. \label{charac}%
\end{equation}
Then, $\Phi_{l}$ has the integral representation%
\begin{align}
\Phi_{l}(x)  &  =\mu_{\infty}\left(  \frac{C}{A}\right)  ^{\frac{l}{2}}%
\sqrt{\frac{\Delta}{B}}\frac{1}{2\pi\mathrm{i}}\int\limits_{\mathrm{Br}}%
\exp\left(  x\theta+\frac{B-\Delta}{\theta}\right) \label{laplacian}\\
&  \times\frac{1}{\theta}\Gamma\left(  1-\alpha+\frac{B}{\theta}\right)
J_{l-\alpha+\frac{B}{\theta}}\left(  \frac{\beta}{\theta}\right)  \left(
\frac{B-\Delta}{\beta}\frac{B}{\theta}\right)  ^{\alpha-\frac{B}{\theta}%
}d\theta\nonumber
\end{align}
and the spectral representation%
\begin{align}
\Phi_{l}(x)  &  =\mu_{\infty}r^{l}-\mu_{\infty}\left(  \frac{C}{A}\right)
^{\frac{l}{2}}\sqrt{\frac{\Delta}{B}}\sum\limits_{j=0}^{\infty}\frac{\left(
j+1-\alpha\right)  ^{j}}{j!}\label{spectrum}\\
&  \times\left(  \frac{B-\Delta}{\beta}\right)  ^{j+1}\exp\left(  x\theta
_{j}+\frac{B-\Delta}{\theta_{j}}\right)  J_{l-j-1}\left(  \frac{\beta}%
{\theta_{j}}\right)  ,\nonumber
\end{align}
where $\mathrm{Br}$ is a vertical contour in the complex \ $\theta-$plane on
which \ $\operatorname{Re}(\theta)>0,$ $\Gamma\left(  \cdot\right)  $ denotes
the Gamma function, $J_{\nu}(\cdot)$ is the Bessel function of the first
kind,
\[
\Delta=\sqrt{B^{2}-\beta^{2}}=\sqrt{B^{2}-4AC}%
\]
and
\[
\theta_{j}=-\frac{B}{j+1-\alpha},\quad j=0,1,\ldots.
\]

\end{theorem}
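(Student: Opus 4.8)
The plan is to reduce the infinite system \eqref{Equ} to a single scalar ODE in a generating variable, solve that ODE, and then invert. First I would introduce the substitution $\Phi_l(x)=\mu_\infty\,r^l+\bigl(C/A\bigr)^{l/2}\,\Psi_l(x)$, chosen so that the constant sequence $\mu_\infty r^l$ (which solves \eqref{Equ} with zero left-hand side, by \eqref{charac}) is subtracted off, and so that the symmetric normalization $(C/A)^{l/2}$ turns the $A\Phi_{l+1}+C\Phi_{l-1}$ pair into $\beta(\Psi_{l+1}+\Psi_{l-1})$ with $\beta=2\sqrt{AC}$. The resulting equation for $\Psi_l$ is $(l-\alpha)\Psi_l'=\beta(\Psi_{l+1}+\Psi_{l-1})-B\Psi_l$ together with $\Psi_l(0)=-\mu_\infty r^l (A/C)^{l/2}$ for $l\ge 1$ and $\Psi_l(\infty)=0$; note $r(A/C)^{1/2}$ equals one of $\{B\pm\Delta\}/\beta$, and the decaying choice at $+\infty$ forces the root $(B-\Delta)/\beta<1$, which is where the quantity $B-\Delta$ in the statement originates.

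Next I would take a Laplace transform in $x$, writing $\widehat\Psi_l(\theta)=\int_0^\infty e^{-x\theta}\Psi_l(x)\,dx$ with $\operatorname{Re}\theta>0$. The factor $(l-\alpha)$ multiplying the $x$-derivative is the key structural feature: since $\mathcal{L}[x f]=-\tfrac{d}{d\theta}\widehat f$, the term $(l-\alpha)\Psi_l'$ does not transform nicely, but instead I would exploit that $l-\alpha$ is the index shift and look for a solution of the transformed recurrence of Bessel type. Concretely, after transforming, the three-term recurrence in $l$ becomes
\[
\beta\,\widehat\Psi_{l+1}+\beta\,\widehat\Psi_{l-1}-\bigl(B+\theta(l-\alpha)\bigr)\widehat\Psi_l=-(l-\alpha)\Psi_l(0),
\]
and the homogeneous version is exactly the recurrence $J_{\nu-1}(w)+J_{\nu+1}(w)=\tfrac{2\nu}{w}J_\nu(w)$ for the Bessel function with $\nu=l-\alpha+B/\theta$ and argument $w=\beta/\theta$. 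This identifies $J_{l-\alpha+B/\theta}(\beta/\theta)$ as the relevant homogeneous solution; the other independent solution ($Y$ or $J_{-\nu}$) is discarded by growth/decay requirements as $l\to\pm\infty$. The particular solution accounting for the boundary data $\Psi_l(0)$ for $l\ge 1$ is built by the standard discrete variation-of-parameters / Green's function construction for three-term recurrences, using a second solution of Bessel type; matching the jump at $l=1$ produces the $\Gamma\bigl(1-\alpha+B/\theta\bigr)$ factor (from the reflection/Wronskian formulas for Bessel functions of shifted order) and the power $\bigl((B-\Delta)/\beta\cdot B/\theta\bigr)^{\alpha-B/\theta}$.

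Having $\widehat\Phi_l(\theta)$ in closed form, the integral representation \eqref{laplacian} is just the Laplace inversion $\Phi_l(x)=\tfrac{1}{2\pi i}\int_{\mathrm{Br}}e^{x\theta}\widehat\Phi_l(\theta)\,d\theta$, with the extra exponential factor $e^{(B-\Delta)/\theta}$ emerging from resumming the boundary series (a generating function $\sum_l \Psi_l(0)(\cdots)^l$ of the geometric sequence $\Psi_l(0)$). Finally, for the spectral representation \eqref{spectrum} I would close the Bromwich contour and collect residues: the integrand's only singularities in $\operatorname{Re}\theta\le 0$ come from the poles of $\Gamma\bigl(1-\alpha+B/\theta\bigr)$, located where $1-\alpha+B/\theta=-j$, i.e. $\theta=\theta_j=-B/(j+1-\alpha)$, $j=0,1,2,\dots$; the residue of $\Gamma$ at a nonpositive integer argument $-j$ is $(-1)^j/j!$ times a Jacobian factor $dw/d\theta$, which combines with the remaining factors and the Bessel identity $J_{l-\alpha+B/\theta_j}=J_{l-j-1}$ (since $B/\theta_j=-(j+1-\alpha)$) to give the stated coefficients $\tfrac{(j+1-\alpha)^j}{j!}\bigl((B-\Delta)/\beta\bigr)^{j+1}$; the leading $\mu_\infty r^l$ is the contribution of the original subtraction. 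The main obstacle I anticipate is the bookkeeping in the discrete Green's function step — pinning down exactly which Bessel solution pairs with which, getting the Wronskian normalization right so that the $\Gamma$-factor and the fractional power come out with the correct exponents, and verifying that the discarded solutions really are inadmissible at $l=\pm\infty$ for the parameter range \eqref{beta}. Convergence of the contour integral and the legitimacy of closing the contour (controlling the Bessel and Gamma factors as $|\theta|\to\infty$ along the arcs, using $B>\beta$ so $\Delta$ is real and $(B-\Delta)/\beta<1$) is a secondary technical point that I would check but do not expect to be serious.
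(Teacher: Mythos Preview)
Your broad strategy---Laplace transform in $x$, recognition of the Bessel recurrence in $l$, Laplace inversion, then residues at the $\Gamma$-poles for the spectral form---matches the paper's. But the key middle step, where you pin down the exact form of $\widehat\Psi_l(\theta)$, has a real gap.

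After the Laplace transform your recurrence carries the inhomogeneous term $-(l-\alpha)\Psi_l(0)$. You know $\Psi_l(0)=-\mu_\infty\bigl((B-\Delta)/\beta\bigr)^l$ only for $l\ge 1$; for $l\le 0$ the values $\Phi_l(0)$, and hence $\Psi_l(0)$, are \emph{unknown}---this is exactly the ``half boundary condition'' difficulty. A discrete Green's function / variation-of-parameters construction needs the source on all of $\mathbb{Z}$, so as written you cannot build the particular solution you describe. Your ``matching the jump at $l=1$'' does not rescue this, because on $l\le 0$ the equation is not homogeneous: it still contains the unknown $\Psi_l(0)$. Consequently your heuristic for the origin of the $\Gamma\!\left(1-\alpha+B/\theta\right)$ factor and of the power $\bigl((B-\Delta)B/(\beta\theta)\bigr)^{\alpha-B/\theta}$ (via a Wronskian) is not substantiated; nor is the claim that $e^{(B-\Delta)/\theta}$ comes from ``resumming the boundary series.''

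The paper avoids this obstacle by an ansatz-plus-analyticity argument rather than a constructive one. First it identifies the discrete spectrum $\theta_j=-B/(j+1-\alpha)$ directly from the separated solutions: for $e^{\theta x}$ not to grow one needs $\theta\le 0$, and for $J_{l-\alpha+B/\theta}(\beta/\theta)$ to be real at negative argument the order must be an integer. It then \emph{postulates}
\[
\widehat\Phi_l(\theta)=\Bigl(\tfrac{C}{A}\Bigr)^{l/2}\,\theta^{-1}\,\Gamma\!\Bigl(\tfrac{B}{\theta}+1-\alpha\Bigr)\,J_{l-\alpha+B/\theta}\!\Bigl(\tfrac{\beta}{\theta}\Bigr)\,f(\theta),
\]
the $\Gamma$ factor being inserted precisely because its poles sit at the $\theta_j$. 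The unknown $f(\theta)$ is fixed in two steps: (i) the residue at $\theta=0$ must equal $\Phi_l(\infty)=\mu_\infty r^l$, and the $\theta\to 0^+$ asymptotics of $\Gamma$ and $J_\nu$ force $f(\theta)=\mu_\infty\sqrt{\Delta/B}\,e^{\Upsilon(\theta)}\tilde f(\theta)$ with $\tilde f$ entire and $\tilde f(0)=1$---this is where the factors $e^{(B-\Delta)/\theta}$ and $\bigl((B-\Delta)B/(\beta\theta)\bigr)^{\alpha-B/\theta}$ actually arise; (ii) the condition $\Phi_l(0)=0$ for $l\ge 1$ gives $\theta\widehat\Phi_l(\theta)\to 0$ as $\theta\to\infty$, and the case $l=1$ alone yields $\tilde f(\theta)=o(\theta)$, so Liouville's theorem forces $\tilde f\equiv 1$. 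Only then are the residues at $\theta_j$ computed to recover the spectral coefficients. If you want to repair your route, you will need to replace the Green's-function step by some analyticity-in-$\theta$ argument of this kind; the half-known boundary data do not give you enough to proceed purely by variation of parameters.
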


\begin{proof}
Equation (\ref{Equ}) admits the separable solutions%
\begin{equation}
\Phi_{l}(x)=e^{\theta x}h_{l}(\theta) \label{Fl}%
\end{equation}
if \ $h_{l}(\theta)$\ satisfies the difference equation%
\[
Ah_{l+1}+Ch_{l-1}=\left[  (l-\alpha)\theta+B\right]  h_{l}.
\]
Setting
\[
\ h_{l}(\theta)=\left(  \frac{C}{A}\right)  ^{\frac{l}{2}}H_{l}(\theta),
\]
we see that \
\begin{equation}
H_{l+1}+H_{l-1}=\frac{2}{\beta}\left[  (l-\alpha)\theta+B\right]  H_{l}.
\label{H}%
\end{equation}
The only solutions to (\ref{H}) which have acceptable behavior as
$l\rightarrow\infty$ are of the form%
\[
H_{l}(\theta)=J_{l-\alpha+\frac{B}{\theta}}\left(  \frac{\beta}{\theta
}\right)
\]
where $J_{\upsilon}(\cdot)$ is the Bessel function. If (\ref{Fl}) is not to
grow as \ $x\rightarrow\infty$, we need $\theta\leq0.$ But except when
$\upsilon$ is an integer, the Bessel function $J_{\upsilon}(\cdot)$ is complex
for negative argument. Therefore, we need%
\[
-\alpha+\frac{B}{\theta}=-1,-2,\ldots
\]
or
\begin{equation}
\theta_{j}=-\frac{B}{j+1-\alpha}<0,\quad j\geq0. \label{thetaj}%
\end{equation}
It follows that the general solution to (\ref{Equ}) takes the form%
\[
\Phi_{l}(x)=\Phi_{l}(\infty)+\left(  \frac{C}{A}\right)  ^{\frac{l}{2}}%
%TCIMACRO{\dsum \limits_{j\geq0}}%
%BeginExpansion
{\displaystyle\sum\limits_{j\geq0}}
%EndExpansion
a_{j}e^{\theta_{j}x}J_{l-\alpha+\frac{B}{\theta_{j}}}\left(  \frac{\beta
}{\theta_{j}}\right)
\]
or%
\begin{equation}
\Phi_{l}(x)=\mu_{\infty}r^{l}+\left(  \frac{C}{A}\right)  ^{\frac{l}{2}}%
%TCIMACRO{\dsum \limits_{j\geq0}}%
%BeginExpansion
{\displaystyle\sum\limits_{j\geq0}}
%EndExpansion
a_{j}\exp\left(  -\frac{B}{j+1-\alpha}x\right)  J_{l-1-j}\left[  -\frac{\beta
}{B}(j+1-\alpha)\right]  \label{Fcorner}%
\end{equation}
where the coefficients \ $a_{j}$ \ in the above (spectral) representation
remain to be determined.

Taking the Laplace transform
\[
\widehat{\Phi}_{l}(\theta)=%
%TCIMACRO{\dint \limits_{0}^{\infty}}%
%BeginExpansion
{\displaystyle\int\limits_{0}^{\infty}}
%EndExpansion
e^{-\theta x}\Phi_{l}(x)dx
\]
of (\ref{Fcorner}) we obtain%
\begin{equation}
\widehat{\Phi}_{l}(\theta)=\mu_{\infty}r^{l}\frac{1}{\theta}+\left(  \frac
{C}{A}\right)  ^{\frac{l}{2}}%
%TCIMACRO{\dsum \limits_{j\geq0}}%
%BeginExpansion
{\displaystyle\sum\limits_{j\geq0}}
%EndExpansion
\frac{a_{j}}{\theta+\frac{B}{j+1-\alpha}}J_{j+1-l}\left[  \frac{\beta}%
{B}(j+1-\alpha)\right]  . \label{lap}%
\end{equation}
Thus, the only singularities of \ $\widehat{\Phi}_{l}(\theta)$ are simple
poles at \ $\theta=0$ and \ $\theta=\theta_{j},\quad\ j\geq0.$ \ It is well
known that the Gamma function \ $\Gamma(z)$ has simple poles at
\ $z=0,-1,-2,\ldots$. Hence, we shall represent $\widehat{\Phi}_{l}(\theta)$
as%
\begin{equation}
\widehat{\Phi}_{l}(\theta)=\left(  \frac{C}{A}\right)  ^{\frac{l}{2}}\frac
{1}{\theta}\Gamma\left(  \frac{B}{\theta}+1-\alpha\right)  J_{l-\alpha
+\frac{B}{\theta}}\left(  \frac{\beta}{\theta}\right)  f(\theta)
\label{laplace1}%
\end{equation}
where \ $f(\theta)$ is chosen such that
\[
\Gamma\left(  \frac{B}{\theta}+1-\alpha\right)  J_{l-\alpha+\frac{B}{\theta}%
}\left(  \frac{\beta}{\theta}\right)  f(\theta)
\]
is analytic for \ $\operatorname{Re}(\theta)>-\frac{B}{1-\alpha}$. Taking the
Laplace transform in (\ref{Equ}), we get the equation%
\[
(l-\alpha)\theta\widehat{\Phi}_{l}(\theta)=A\widehat{\Phi}_{l+1}%
(\theta)+C\widehat{\Phi}_{l-1}(\theta)-B\widehat{\Phi}_{l}(\theta),\ l\geq1,
\]
which is satisfied by (\ref{laplace1}). By the inversion formula for the
Laplace transform, we have%
\[
\Phi_{l}(x)=\left(  \frac{C}{A}\right)  ^{\frac{l}{2}}\frac{1}{2\pi\mathrm{i}}%
%TCIMACRO{\dint \limits_{\mathrm{Br}}}%
%BeginExpansion
{\displaystyle\int\limits_{\mathrm{Br}}}
%EndExpansion
e^{x\theta}\frac{1}{\theta}\Gamma\left(  \frac{B}{\theta}+1-\alpha\right)
J_{l-\alpha+\frac{B}{\theta}}\left(  \frac{\beta}{\theta}\right)
f(\theta)d\theta,
\]
where $\mathrm{Br}$ is a vertical contour in the complex \ $\theta-$plane on
which \ $\operatorname{Re}(\theta)>0.$

Since the residue of \ $\widehat{\Phi}_{l}(\theta)$ at \ $\theta=0$
\ corresponds to \ $\Phi_{l}(\infty)$, we must have%
\[
\left(  \frac{C}{A}\right)  ^{\frac{l}{2}}\Gamma\left(  \frac{B}{\theta
}+1-\alpha\right)  J_{l-\alpha+\frac{B}{\theta}}\left(  \frac{\beta}{\theta
}\right)  f(\theta)\rightarrow\mu_{\infty}r^{l}%
\]
as \ $\theta\rightarrow0.$ Using the asymptotic formulas ($z\rightarrow
\infty)$%
\begin{equation}
\Gamma\left(  a+bz\right)  \sim\sqrt{2\pi}e^{-bz}\left(  bz\right)
^{a+bz-\frac{1}{2}},\quad b>0 \label{asygamma}%
\end{equation}
and%
\begin{equation}
J_{a+bz}\left(  cz\right)  \sim\frac{1}{\sqrt{2\pi z}\left(  b^{2}%
-c^{2}\right)  ^{\frac{1}{4}}}\exp\left(  z\sqrt{b^{2}-c^{2}}\right)  \left(
\frac{b-\sqrt{b^{2}-c^{2}}}{c}\right)  ^{a+bz},\quad b>c>0, \label{asybessel}%
\end{equation}
in (\ref{laplace1}), we see that%
\[
\Gamma\left(  \frac{B}{\theta}+1-\alpha\right)  J_{l-\alpha+\frac{B}{\theta}%
}\left(  \frac{\beta}{\theta}\right)  \sim e^{\frac{\Delta-B}{\theta}}\left(
\frac{B-\Delta}{\beta}\right)  ^{l}\left(  \frac{B-\Delta}{\beta}\frac
{B}{\theta}\right)  ^{\frac{B}{\theta}-\alpha}\sqrt{\frac{B}{\Delta}}%
,\quad\theta\rightarrow0
\]
or, using (\ref{charac}),%
\[
\Gamma\left(  \frac{B}{\theta}+1-\alpha\right)  J_{l-\alpha+\frac{B}{\theta}%
}\left(  \frac{\beta}{\theta}\right)  \sim e^{\frac{\Delta-B}{\theta}}%
r^{l}\left(  \frac{A}{C}\right)  ^{\frac{l}{2}}\left(  \frac{B-\Delta}{\beta
}\frac{B}{\theta}\right)  ^{\frac{B}{\theta}-\alpha}\sqrt{\frac{B}{\Delta}%
},\quad\theta\rightarrow0.
\]
Therefore, we write%
\begin{equation}
f(\theta)=\mu_{\infty}\sqrt{\frac{\Delta}{B}}\exp\left[  \Upsilon
(\theta)\right]  \tilde{f}(\theta), \label{f}%
\end{equation}
where%
\begin{equation}
\Upsilon(\theta)=\frac{B-\Delta}{\theta}-\left(  \frac{B}{\theta}%
-\alpha\right)  \ln\left(  \frac{B-\Delta}{\beta}\frac{B}{\theta}\right)  ,
\label{Lambda}%
\end{equation}
and\ \ $\tilde{f}(\theta)$ is entire, with \ $\tilde{f}(0)=1.$

By combining the preceding results, we have%
\begin{equation}
\Phi_{l}(x)=\mu_{\infty}\sqrt{\frac{\Delta}{B}}\left(  \frac{C}{A}\right)
^{\frac{l}{2}}\frac{1}{2\pi\mathrm{i}}%
%TCIMACRO{\dint \limits_{\mathrm{Br}}}%
%BeginExpansion
{\displaystyle\int\limits_{\mathrm{Br}}}
%EndExpansion
e^{x\theta}\frac{1}{\theta}\Gamma\left(  \frac{B}{\theta}+1-\alpha\right)
J_{l-\alpha+\frac{B}{\theta}}\left(  \frac{\beta}{\theta}\right)  \exp\left[
\Upsilon(\theta)\right]  \tilde{f}(\theta)d\theta. \label{Fcorner1}%
\end{equation}
The boundary condition (\ref{BCPhi}) implies that
\[
\underset{\theta\rightarrow\infty}{\lim}\left[  \theta\widehat{\Phi}%
_{l}(\theta)\right]  =0,\quad l\geq1
\]
and using the asymptotic formula%
\[
J_{\nu}(z)\sim\left(  \frac{z}{2}\right)  ^{\nu}\frac{1}{\Gamma\left(
\nu+1\right)  },\quad z\rightarrow0,\quad\nu\neq-1,-2,\ldots
\]
in (\ref{Fcorner1}), we have%
\begin{align*}
&  \frac{1}{\theta}\Gamma\left(  \frac{B}{\theta}+1-\alpha\right)
J_{l-\alpha+\frac{B}{\theta}}\left(  \frac{2\beta}{\theta}\right)  \exp\left[
\Upsilon(\theta)\right] \\
&  \sim\left(  \frac{1}{\theta}\right)  ^{l+1}\frac{\Gamma\left(
1-\alpha\right)  }{\Gamma\left(  l-\alpha+1\right)  }\left(  \frac{\beta}%
{2}\right)  ^{l}\left(  \frac{rB}{C}\right)  ^{\alpha},\quad\theta
\rightarrow\infty.
\end{align*}
Setting $l=1$, we get
\[
\ \tilde{f}(\theta)=o\left(  \theta\right)  ,\quad\theta\rightarrow\infty
\]
and Liouville's theorem forces \ $\tilde{f}(\theta)$ to be a constant. Since
\ $\tilde{f}(0)=1,$ we have $\tilde{f}(\theta)\equiv1.$ Thus, (\ref{Fcorner1})
becomes%
\begin{align}
\Phi_{l}(x)  &  =\mu_{\infty}\sqrt{\frac{\Delta}{B}}\left(  \frac{C}%
{A}\right)  ^{\frac{l}{2}}\frac{1}{2\pi\mathrm{i}}%
%TCIMACRO{\dint \limits_{\mathrm{Br}}}%
%BeginExpansion
{\displaystyle\int\limits_{\mathrm{Br}}}
%EndExpansion
\left[  e^{x\theta}\frac{1}{\theta}\right. \label{Fcorner2}\\
&  \left.  \times\Gamma\left(  \frac{B}{\theta}+1-\alpha\right)
J_{l-\alpha+\frac{B}{\theta}}\left(  \frac{\beta}{\theta}\right)  \exp\left[
\Upsilon(\theta)\right]  \right]  d\theta\nonumber
\end{align}
and we obtain (\ref{laplacian}).

The coefficients \ $a_{j}$ in the spectral expansion (\ref{Fcorner}) are
determined from (\ref{Fcorner2}) by applying the residue theorem. Noting that%
\[
\operatorname*{Res}\left[  \frac{1}{\theta}\Gamma\left(  \frac{B}{\theta
}+1-\alpha\right)  ,\ \theta=\theta_{j}\right]  =\frac{\left(  -1\right)
^{j}}{\left(  j+1-\alpha\right)  j!},
\]
we obtain%
\begin{equation}
\ a_{j}=\mu_{\infty}\sqrt{\frac{\Delta}{B}}\left(  \frac{C}{A}\right)
^{\frac{l}{2}}e^{x\theta_{j}}\frac{\left(  -1\right)  ^{j}}{\left(
j+1-\alpha\right)  j!}J_{l-\alpha+\frac{B}{\theta_{j}}}\left(  \frac{\beta
}{\theta_{j}}\right)  \exp\left[  \Upsilon(\theta_{j})\right]  ,\quad j\geq0.
\label{aj}%
\end{equation}
Using (\ref{thetaj}) in (\ref{Lambda}), we get%
\[
\exp\left[  \Upsilon(\theta_{j})\right]  =\exp\left(  \frac{B-\Delta}%
{\theta_{j}}\right)  \left(  \frac{B-\Delta}{\beta}\right)  ^{j+1}\left(
\alpha-j-1\right)  ^{j+1}%
\]
and (\ref{spectrum}) follows.
\end{proof}

\vspace{0in}For the problem (\ref{diffcorner})-(\ref{infcorner}), we have%
\begin{equation}
A=\mu,\quad B=\lambda+\mu,\quad C=\lambda,\quad r=\rho,\quad\mu_{\infty
}=\left(  1-\rho\right)  \rho^{c-\alpha},\quad\Delta=\mu-\lambda\label{dictio}%
\end{equation}
and therefore
\begin{align}
F_{l}^{(1)}(x)  &  =\left(  1-\rho\right)  \sqrt{\frac{\mu-\lambda}%
{\mu+\lambda}}\rho^{c-\alpha+\frac{l}{2}}\frac{1}{2\pi\mathrm{i}}%
%TCIMACRO{\dint \limits_{\mathrm{Br}}}%
%BeginExpansion
{\displaystyle\int\limits_{\mathrm{Br}}}
%EndExpansion
\left[  e^{x\theta}\frac{1}{\theta}\right. \label{cornerint}\\
&  \left.  \times\Gamma\left(  \frac{\lambda+\mu}{\theta}+1-\alpha\right)
J_{l-\alpha+\frac{\lambda+\mu}{\theta}}\left(  \frac{2\sqrt{\mu\lambda}%
}{\theta}\right)  \exp\left[  \Lambda(\theta)\right]  \right]  d\theta
,\nonumber
\end{align}
with%
\[
\Lambda(\theta)=\frac{2\lambda}{\theta}-\left(  \frac{\lambda+\mu}{\theta
}-\alpha\right)  \ln\left(  \sqrt{\rho}\ \frac{\lambda+\mu}{\theta}\right)  .
\]
Also,%
\begin{align}
F_{l}^{(1)}(x)  &  =\left(  1-\rho\right)  \rho^{c-\alpha+\frac{l}{2}}\left[
\rho^{\frac{l}{2}}-\sqrt{\frac{\mu-\lambda}{\mu+\lambda}}\sum\limits_{j=0}%
^{\infty}\frac{\left(  j+1-\alpha\right)  ^{j}}{j!}\right.  \label{cornerspec}%
\\
&  \left.  \times\rho^{\frac{j+1}{2}}\exp\left(  x\vartheta_{j}+\frac
{2\lambda}{\vartheta_{j}}\right)  J_{l-j-1}\left(  \frac{2\sqrt{\mu\lambda}%
}{\vartheta_{j}}\right)  \right]  ,\nonumber
\end{align}
where%
\[
\vartheta_{j}=-\frac{\lambda+\mu}{j+1-\alpha},\quad j\geq0.
\]

This completes the determination of the spectral and integral representations
of $F_{l}^{(1)}(x)$ and hence the leading term for $F_{k}(x)$ in the corner region.

\subsection{\ \ \ Matching the corner and $R^{C}$ regions}

In this section we shall determine the function \ $K_{0}(s)$ in (\ref{K})\ and
the power $\nu$ in (\ref{GRay}). We begin with the following result.

\begin{theorem}
With the same hypothesis and notation as in Theorem \ref{Th1}, let \ $\Omega$
be defined by
\begin{equation}
\Omega=\frac{2\Delta x}{(l-\alpha)^{2}}=O(1). \label{Omega}%
\end{equation}
Then,
\[
\Phi_{l}(x)\sim\mu_{\infty}r^{l}\sqrt{\frac{2B}{3\pi\Delta(l-\alpha)}%
}(1-\Omega)^{-1},\quad x\rightarrow\infty,\quad l\rightarrow\infty,\quad
\Omega\rightarrow1.
\]

\end{theorem}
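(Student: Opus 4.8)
The plan is to extract the asymptotics of $\Phi_l(x)$ in the scaling regime $x,l\to\infty$ with $\Omega = 2\Delta x/(l-\alpha)^2$ fixed, starting from the integral representation \eqref{laplacian} established in Theorem \ref{Th1}. First I would rescale the contour variable by writing $\theta = (l-\alpha)\,\sigma$ (or equivalently $B/\theta = (l-\alpha)/\tau$ for a suitable $\tau$), so that the large parameter $l-\alpha$ appears uniformly in the exponent. The three $\theta$-dependent pieces of the integrand — the exponential $\exp(x\theta + (B-\Delta)/\theta)$, the product $\Gamma(1-\alpha+B/\theta)J_{l-\alpha+B/\theta}(\beta/\theta)$, and the algebraic factor $(\tfrac{B-\Delta}{\beta}\tfrac{B}{\theta})^{\alpha-B/\theta}$ — must each be expanded for large $l-\alpha$. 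For the Gamma and Bessel factors I would use exactly the uniform asymptotic formulas \eqref{asygamma} and \eqref{asybessel} already quoted in the proof of Theorem \ref{Th1}, now with $z = B/\theta \to\infty$ proportional to $l-\alpha$; the ratio $b/c$ there is $B/\beta$, so $\sqrt{b^2-c^2}$ becomes $\Delta/\beta$ times the appropriate factor, and one finds that the combined $\Gamma\cdot J$ contribution produces a clean exponential $\exp\!\big(\tfrac{\Delta - B}{\theta}\big)$ times a power of $(B-\Delta)/\beta$ times a slowly varying $\sqrt{B/\Delta}$ prefactor — precisely the cancellation already recorded in the $\theta\to 0$ computation inside Theorem \ref{Th1}, but now I need it uniformly in the scaled regime rather than just as $\theta\to0$.

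Next I would assemble the total exponent. After the cancellations of the $(B-\Delta)/\theta$ terms, the surviving exponent should be of the form $(l-\alpha)\,\Xi(\sigma;\Omega)$ for an explicit function $\Xi$ built from $x\theta = \tfrac12\Omega(l-\alpha)\sigma$, the $B/\theta$ logarithm, and the leftover Bessel/Gamma exponential. I then apply the saddle-point (Laplace/steepest-descent) method: locate the critical point $\sigma = \sigma_*(\Omega)$ from $\partial_\sigma\Xi = 0$, and the leading contribution is the integrand at $\sigma_*$ times $\sqrt{2\pi/\big((l-\alpha)\,|\Xi''(\sigma_*)|\big)}$. I expect the bookkeeping to show that at $\Omega=1$ the value $\Xi(\sigma_*;1)$ vanishes (consistent with the prefactor $\mu_\infty r^l$ in the claimed answer, which carries no exponential growth), that the remaining algebraic factors collapse to $\mu_\infty r^l$, and that the Gaussian prefactor $\sqrt{2\pi/((l-\alpha)|\Xi''|)}$ combines with the explicit constants $\sqrt{\Delta/B}$ and $\tfrac{1}{2\pi}$ from \eqref{laplacian} to give exactly $\sqrt{\tfrac{2B}{3\pi\Delta(l-\alpha)}}$. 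The $1/3$ inside the square root is the telltale sign that, as $\Omega\to1$, the ordinary quadratic saddle degenerates: two critical points coalesce, so $\Xi''(\sigma_*)\to0$ and the effective second-derivative scale is governed instead by a cubic term, which is consistent with the cubic vanishing $\mathbf J = (\mu^2-\lambda^2)t^3/3 + O(t^4)$ of the Jacobian noted after \eqref{J2}. The factor $(1-\Omega)^{-1}$ in the statement is then the boundary-layer correction capturing the approach to this coalescence: I would obtain it by retaining the next term in the expansion of $\Xi$ near $\sigma_*$, i.e.\ by doing the saddle-point analysis with $\Omega$ near but not equal to $1$ and tracking the $O(1-\Omega)$ shift of the saddle, so that the Gaussian width picks up a factor $(1-\Omega)^{-1}$ relative to the confluent normalization.

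The main obstacle, as usual with confluent-saddle asymptotics, is handling the regime $\Omega\to1$ uniformly: a naive isolated-saddle expansion blows up there (its error terms are $O((1-\Omega)^{-k})$), while the genuinely uniform treatment requires an Airy-type reduction near the coalescence and then matching that Airy scaling back to the $(1-\Omega)^{-1}$ algebraic regime. Since the statement only claims the result in the limit $x\to\infty$, $l\to\infty$ with $\Omega\to1$ — i.e.\ it is the \emph{leading} behaviour in the transition layer, not a uniform formula for all $\Omega$ — I can avoid the full Airy machinery and instead argue as follows: do the steepest-descent analysis for fixed $\Omega<1$, obtain the leading term with the degenerate Gaussian factor $\propto (l-\alpha)^{-1/2}(1-\Omega)^{-1/2}\cdot(\text{something})^{-1/2}$, then observe that as $\Omega\uparrow1$ the relevant curvature scales like $(1-\Omega)^{?}$ so that the $(1-\Omega)$ powers recombine into the single factor $(1-\Omega)^{-1}$; alternatively, and more cleanly, I would deform the Bromwich contour through the confluence point, substitute $\theta = \theta_* + (l-\alpha)^{-1/3}\,u$, and after the cubic-order expansion identify the coefficient of the resulting Airy-like integral — whose value at the relevant argument furnishes both the numerical constant $\sqrt{2B/(3\pi\Delta)}$ (the $3$ coming from the cubic) and, upon re-expanding for $\Omega$ slightly less than $1$, the $(1-\Omega)^{-1}$ growth. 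Verifying that all the explicit prefactors $\big((C/A)^{l/2}$, $\sqrt{\Delta/B}$, the power $(\tfrac{B-\Delta}{\beta}\tfrac{B}{\theta})^{\alpha-B/\theta}\big)$ conspire to leave precisely $\mu_\infty r^l$ at $\Omega=1$ is a finite but delicate computation that I would carry out using \eqref{charac} (to convert $(C/A)^{l/2}$ and $(B-\Delta)/\beta$ into powers of $r$) exactly as was done at the end of the proof of Theorem \ref{Th1}.
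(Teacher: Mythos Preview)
Your overall plan --- rescale the integration variable so that the large parameter $l-\alpha$ sits in the exponent, replace the Gamma and Bessel factors by their large-argument asymptotics \eqref{asygamma}--\eqref{asybessel}, and then apply the saddle-point method --- is exactly what the paper does. (A small slip: the correct scaling is $\theta=O\big((l-\alpha)^{-1}\big)$, i.e.\ your parenthetical ``$B/\theta=(l-\alpha)/\tau$'', not the $\theta=(l-\alpha)\sigma$ you write first. The paper sets $\eta=(l-\alpha)\theta+B$ and works in the $\eta$ variable.)

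Where your proposal goes wrong is the mechanism you identify for the $(1-\Omega)^{-1}$ factor. You attribute it to a \emph{coalescing-saddle} (Airy-type) degeneration, with $\Xi''(\sigma_*)\to0$ as $\Omega\to1$ and the $1/3$ inherited from a cubic. That is not what happens here. After the Bessel/Gamma expansion the integrand carries an explicit algebraic factor $\dfrac{1}{\eta-B}$ coming from the original $\dfrac{1}{\theta}$ in \eqref{laplacian}, and the phase $g(\eta)$ has a \emph{single, nondegenerate} saddle $\eta^*(\Omega)$ with $g''(\eta^*)\to B/(3\Delta^3)\neq 0$ as $\Omega\to1$. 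What moves is the location of the saddle: $\eta^*(\Omega)\to B$ as $\Omega\to1$, so the saddle runs into the simple pole. The divergence $(1-\Omega)^{-1}$ is produced entirely by the prefactor $1/(\eta^*-B)\sim -\tfrac{2B}{3\Delta^2}(\Omega-1)^{-1}$ evaluated at the saddle; the Gaussian width stays finite. The constant $\sqrt{2B/(3\pi\Delta)}$ then falls out of the ordinary quadratic saddle-point formula combined with this pole factor, with the $3$'s coming from the Taylor coefficients of $g$ near $\eta=B$, not from any cubic vanishing of the phase. So you should drop the Airy reduction and the coalescing-saddle narrative, keep the saddle nondegenerate, and track the $1/\theta$ (equivalently $1/(\eta-B)$) prefactor through the computation; the analogy with the Jacobian expansion \eqref{J2} is relevant only later, in the matching of Section~4.1, not in this integral estimate.
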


\begin{proof}
We set%
\[
\theta=\varepsilon\Theta,\quad\eta=(l-\alpha)\theta+B=(z-1)\Theta+B,\quad
\eta,\Theta=O(1),\quad\eta,\Theta>0
\]
and using (\ref{asybessel}), we obtain
\[
J_{\frac{\eta}{\varepsilon\Theta}}\left(  \frac{\beta}{\varepsilon\Theta
}\right)  \sim\frac{\sqrt{\varepsilon\Theta}}{\sqrt{2\pi}\left(  \eta
^{2}-\beta^{2}\right)  ^{\frac{1}{4}}}\exp\left(  \frac{1}{\varepsilon\Theta
}\sqrt{\eta^{2}-\beta^{2}}\right)  \left(  \frac{\eta-\sqrt{\eta^{2}-\beta
^{2}}}{\beta}\right)  ^{\frac{\eta}{\varepsilon\Theta}}%
\]%
\[
=\sqrt{\frac{\varepsilon\Theta}{2\pi p(\eta)}}\exp\left\{  \frac
{1}{\varepsilon\Theta}\left[  p(\eta)+\eta\ln\left(  \frac{\eta-p(\eta)}%
{\beta}\right)  \right]  \right\}  ,\quad\varepsilon\rightarrow0
\]
with%
\begin{equation}
p(\eta)=\sqrt{\eta^{2}-\beta^{2}},\quad p(B)=\Delta. \label{P}%
\end{equation}
Use of (\ref{asygamma}) gives%
\[
\Gamma\left(  \frac{B}{\varepsilon\Theta}+1-\alpha\right)  \sim\sqrt{2\pi}%
\exp\left\{  \frac{B}{\varepsilon\Theta}\left[  \ln\left(  \frac
{B}{\varepsilon\Theta}\right)  -1\right]  \right\}  \left(  \frac
{B}{\varepsilon\Theta}\right)  ^{\frac{1}{2}-\alpha}%
\]
and from (\ref{Lambda}) we have%
\[
\exp\left[  \Upsilon(\varepsilon\Theta)\right]  =\exp\left[  \frac{B-\Delta
}{\varepsilon\Theta}-\frac{B}{\varepsilon\Theta}\ln\left(  \frac{B-\Delta
}{\beta}\frac{B}{\theta}\right)  \right]  \left(  \frac{B-\Delta}{\beta}%
\frac{B}{\varepsilon\Theta}\right)  ^{\alpha}.
\]
Therefore,%
\begin{gather}
J_{\frac{\eta}{\varepsilon\Theta}}\left(  \frac{\beta}{\varepsilon\Theta
}\right)  \Gamma\left(  \frac{B}{\varepsilon\Theta}+1-\alpha\right)
\exp\left[  \Upsilon(\varepsilon\Theta)\right]  \sim\label{asymp1}\\
\sqrt{\frac{B}{p(\eta)}}\left(  \frac{B-\Delta}{\beta}\right)  ^{\alpha}%
\exp\left\{  \frac{1}{\varepsilon\Theta}\left[  p(\eta)+\eta\ln\left(
\frac{\eta-p(\eta)}{\beta}\right)  -\Delta-B\ln\left(  \frac{B-\Delta}{\beta
}\right)  \right]  \right\}  .\nonumber
\end{gather}

Using (\ref{asymp1}) in (\ref{Fcorner2}) yields, in terms of $z$ and $\Omega,$%
\begin{equation}
\Phi_{l}(x)\sim\mu_{\infty}r^{\alpha}\sqrt{\Delta}\left(  \frac{C}{A}\right)
^{\frac{z-1}{2\varepsilon}}\frac{1}{2\pi i}%
%TCIMACRO{\dint \limits_{\mathrm{Br}^{\prime}}}%
%BeginExpansion
{\displaystyle\int\limits_{\mathrm{Br}^{\prime}}}
%EndExpansion
\frac{1}{\eta-B}\frac{1}{\sqrt{p(\eta)}}\exp\left[  \frac{1}{\varepsilon
}\left(  z-1\right)  g(\eta)\right]  d\eta, \label{asymp2}%
\end{equation}
where%
\begin{equation}
g(\eta)=\frac{\left(  \eta-B\right)  \Omega}{2\Delta}+\frac{1}{\eta-B}\left[
p(\eta)+\eta\ln\left(  \frac{\eta-p(\eta)}{\beta}\right)  -\Delta-B\ln\left(
\frac{B-\Delta}{\beta}\right)  \right]  \label{g}%
\end{equation}
and $\mathrm{Br}^{\prime}$ is a vertical contour in the complex plane with
$\operatorname{Re}(\eta)>B.$ For \ $\varepsilon\rightarrow0,$\ with $\Omega$
\ fixed, we can evaluate (\ref{asymp2}) by the saddle point method to get%
\begin{equation}
\Phi_{l}(x)\sim\mu_{\infty}r^{\alpha}\sqrt{\Delta}\left(  \frac{C}{A}\right)
^{\frac{z-1}{2\varepsilon}}\sqrt{\frac{\varepsilon}{2\pi\left(  z-1\right)  }%
}\frac{1}{\eta^{\ast}-B}\exp\left[  \frac{1}{\varepsilon}\left(  z-1\right)
g(\eta^{\ast})\right]  \frac{1}{\sqrt{p(\eta^{\ast})g^{\prime\prime}%
(\eta^{\ast})}}, \label{saddle}%
\end{equation}
where the saddle point \ $\eta^{\ast}\left(  \Omega\right)  $ \ is defined by
\ $g^{\prime}(\eta^{\ast})=0$. Note that $\eta^{\ast}\left(  \Omega\right)
>B$ for $\Omega<1,$ i.e., the saddle point $\eta^{\ast}\left(  \Omega\right)
$ lies to the right of the pole at $\eta=B$ and the integrand is analytic for
$\operatorname{Re}(\eta)>B$.

Taking the derivative of (\ref{g}), we find that%
\begin{equation}
g^{\prime}(\eta)=\frac{\Omega}{2\Delta}+\frac{1}{\left(  \eta-B\right)  ^{2}%
}\left[  \Delta+B\ln\left(  \frac{B-\Delta}{\eta-p(\eta)}\right)
-p(\eta)\right]  \label{dg}%
\end{equation}
and we observe that \ $g^{\prime}(\eta^{\ast})=0$ \ if \ $\eta^{\ast}=B$ and
$\Omega=1,$ which implies that \ $\eta^{\ast}\left(  1\right)  =B.$ To
determine $\eta^{\ast}$ for $\Omega\sim1,$ we use (\ref{dg}) and an expansion
of the form%
\begin{equation}
\eta^{\ast}\left(  \Omega\right)  \sim B+a_{1}\left(  \Omega-1\right)
+a_{2}\left(  \Omega-1\right)  ^{2}+a_{3}\left(  \Omega-1\right)  ^{3}+\cdots.
\label{etastar}%
\end{equation}
Using (\ref{etastar}) in (\ref{dg}) and expanding the latter in powers of
\ $\Omega-1,$ we find that
\[
a_{1}=-\frac{3\Delta^{2}}{2B},\quad a_{2}=-\frac{27\Delta^{2}}{32B^{3}}\left(
\Delta^{2}-3B^{2}\right)
\]
and
\begin{align*}
g(\eta^{\ast})  &  \sim\ln\left(  \frac{B-\Delta}{\beta}\right)
-\frac{3\Delta}{8B}\left(  \Omega-1\right)  ^{2},\quad g^{\prime\prime}%
(\eta^{\ast})\sim\frac{B}{3\Delta^{3}},\\
\quad\frac{1}{\eta^{\ast}-B}  &  \sim-\frac{2B}{3\Delta^{2}}\left(
\Omega-1\right)  ^{-1},\quad\frac{1}{\sqrt{p(\eta^{\ast})}}\sim\frac{1}%
{\sqrt{\Delta}},
\end{align*}
from which we conclude that
\begin{equation}
\Phi_{l}(x)\sim-\mu_{\infty}r^{l}\sqrt{\frac{2B}{3\pi\Delta(l-\alpha)}}\left(
\Omega-1\right)  ^{-1}. \label{fsaddle1}%
\end{equation}

\end{proof}

\vspace*{0in}

Using (\ref{dictio}) in (\ref{fsaddle1}), we get, for $x,l\rightarrow\infty$
with $\Omega\rightarrow1,$%
\begin{equation}
F_{l}^{(1)}(x)\sim-\left(  1-\rho\right)  \exp\left[  \frac{z\ln\left(
\rho\right)  }{\varepsilon}\right]  \sqrt{\frac{2(\mu+\lambda)\varepsilon
}{3\pi(\mu-\lambda)(z-1)}}\left(  \Omega-1\right)  ^{-1}. \label{fsaddle}%
\end{equation}
This must agree with the behavior of the ray expansion in $R^{C}$ as
$(y,z)\rightarrow(0,1).$ We next evaluate $K$ and $\psi$ in (\ref{GRay}) near
the corner $(0,1).$ From (\ref{z(s,t)}), we have%
\begin{equation}
t\sim\frac{z-1}{\mu-\lambda}-\frac{\left(  z-1\right)  ^{2}\left(  \mu
+\lambda\right)  }{2\left(  \mu-\lambda\right)  ^{3}}s,\quad s\rightarrow
0^{+}. \label{T0p}%
\end{equation}
Using (\ref{T0p}) in (\ref{y(s,t)}), we obtain%
\[
y\sim\frac{\left(  z-1\right)  ^{2}}{2(\mu-\lambda)}-\frac{\left(  z-1\right)
^{3}\left(  \mu+\lambda\right)  }{3(\mu-\lambda)^{3}}s,\quad s\rightarrow
0^{+},
\]
or, using (\ref{Omega}),
\begin{equation}
s\sim-\frac{3}{2}\frac{(\mu-\lambda)^{2}}{\left(  \mu+\lambda\right)  }%
\frac{\Omega-1}{z-1},\quad z\rightarrow1. \label{s}%
\end{equation}
We expand (\ref{psi1}) for small $t$%
\[
\psi(s,t)\sim\ln\left(  \rho\right)  \left[  1+(\mu-\lambda)t\right]  ,\quad
t\rightarrow0,
\]
which taking (\ref{T0p}) into account gives%
\[
\Psi(y,z)\sim\ln\left(  \rho\right)  +(z-1)\ln\left(  \rho\right)  ,\quad
z\rightarrow1
\]
in agreement with the exponential part of (\ref{fsaddle}).

From (\ref{K})-(\ref{J2}), we obtain%
\[
K(s,t)\sim K_{0}(s)\sqrt{\frac{3}{\left(  \mu^{2}-\lambda^{2}\right)  t^{3}}%
},\quad t\rightarrow0,
\]
or, using (\ref{s}) and (\ref{T0p}) in the above,%
\begin{equation}
\mathbb{K}(y,z)\sim K_{0}\left(  -\frac{3}{2}\frac{(\mu-\lambda)^{2}}{\left(
\mu+\lambda\right)  }\frac{\Omega-1}{z-1}\right)  \sqrt{\frac{3(\mu
-\lambda)^{3}}{\left(  \mu^{2}-\lambda^{2}\right)  (z-1)^{3}}},\quad
z\rightarrow1. \label{K1}%
\end{equation}
Matching the algebraic factors in (\ref{fsaddle}) and (\ref{K1}) yields%
\[
\varepsilon^{\nu}K_{0}\left(  -\frac{3}{2}\frac{(\mu-\lambda)^{2}}{\left(
\mu+\lambda\right)  }\frac{\Omega-1}{z-1}\right)  =-\sqrt{\varepsilon}\frac
{2}{3}\frac{\mu+\lambda}{(\mu-\lambda)^{2}}\frac{z-1}{\Omega-1}\sqrt{\frac
{\mu-\lambda}{2\pi}}\left(  1-\rho\right)  ,
\]
which implies that%
\begin{equation}
K_{0}(s)=\sqrt{\frac{\mu-\lambda}{2\pi}}\frac{1-\rho}{s} \label{K0}%
\end{equation}
and \ $\nu=\frac{1}{2}.$ This completes the determination of the asymptotic
solution corresponding to rays from the point \ $(0,1).$ To summarize, we have
established the following.

\begin{result}
The solution of (\ref{eqG}) is asymptotically given by%
\begin{equation}
G(y,z)\sim\sqrt{\varepsilon}\exp\left[  \frac{1}{\varepsilon}\Psi(y,z)\right]
\mathbb{K}(y,z)\text{ \ in }R^{C} \label{G4}%
\end{equation}%
\begin{equation}
G(\infty,z)-G(y,z)\sim-\sqrt{\varepsilon}\exp\left[  \frac{1}{\varepsilon}%
\Psi(y,z)\right]  \mathbb{K}(y,z)\text{ \ in }R \label{G3}%
\end{equation}
with%
\[
G(\infty,z)=\left(  1-\rho\right)  \exp\left[  \frac{1}{\varepsilon}%
z\ln\left(  \rho\right)  \right]  ,
\]%
\[
R=\left\{  0\leq y,\quad0\leq z\leq1\right\}  \cup\left\{  Y_{0}(z)\leq
y,\quad1\leq z\right\}  ,\quad Y_{0}(z)=\frac{\left(  z-1\right)  ^{2}}%
{2(\mu-\lambda)},\quad1\leq z,
\]%
\begin{equation}
\Psi(y,z)=\psi(s,t)=2ys+\left[  \ln\left(  \rho\right)  -st\right]  \left(
z-1\right)  +\ln\left(  \rho\right)  , \label{psi3}%
\end{equation}%
\begin{equation}
\mathbb{K}(y,z)=K(s,t)=\sqrt{\frac{\mu-\lambda}{2\pi\mathbf{J}(s,t)}}%
\frac{1-\rho}{s}, \label{K2}%
\end{equation}
where $(y,z)$ is related to $(s,t)$ by (\ref{y(s,t)}) and (\ref{z(s,t)}), and
$\mathbf{J}(s,t)$ was defined in (\ref{J}). We note that $s<0$ in $R$ so that
the right side of (\ref{G3}) is positive. This gives the leading term for the
probability%
\[
\Pr\left[  X(\infty)>x=\frac{y}{\varepsilon},\quad Z(\infty)=k=\frac
{z}{\varepsilon}\right]
\]
that the buffer exceeds $x=cy.$

In the corner range where (\ref{alpha}) applies, the leading term is given by
(\ref{cornerint}) or (\ref{cornerspec}).
\end{result}

\section{Transition layer}

We shall find a transition layer solution near the curve \ $y=Y_{0}(z)$
defined by (\ref{Y0}) which separates \ $R$ and $R^{C}.$ On this curve
\ $s=0,$ and hence (\ref{K2}) is not valid because \ $\mathbb{K}(y,z)$ \ is
infinite there.

We introduce the new function \ $L_{k}(x)$ defined by%
\[
F_{k}(x)=F_{k}(\infty)L_{k}(x).
\]
Then (\ref{diffeq}) yields for \ $L_{k}(x)$ the equation%
\[
(k-c)L_{k}^{\prime}=\mu L_{k-1}+\lambda L_{k+1}-\left(  \lambda+\mu\right)
L_{k}%
\]
and the boundary condition (\ref{Finf}) becomes%
\begin{equation}
L_{k}(\infty)=1. \label{Linf}%
\end{equation}
In terms of the variables \ $y=\varepsilon^{2}x,\quad z=\varepsilon k,$ the
function $L^{(1)}(y,z)=$ $L_{k}(x)$ satisfies%
\[
\varepsilon(z-1)\frac{\partial L^{(1)}}{\partial y}(y,z)=\mu L^{(1)}%
(y,z-\varepsilon)+\lambda L^{(1)}(y,z+\varepsilon)-\left(  \lambda+\mu\right)
L^{(1)}(y,z)
\]
and therefore, as $\varepsilon\rightarrow0,$%
\[
(z-1)\frac{\partial L^{(1)}}{\partial y}=\left(  \lambda-\mu\right)
\frac{\partial L^{(1)}}{\partial z}+\frac{\lambda+\mu}{2}\frac{\partial
^{2}L^{(1)}}{\partial z^{2}}\varepsilon+O\left(  \frac{\partial^{3}L^{(1)}%
}{\partial z^{3}}\varepsilon^{2}\right)  .
\]
Introducing the stretched variable $\varrho$,\ defined by,%
\begin{equation}
y=Y_{0}(z)+\sqrt{\varepsilon}\varrho=\frac{\left(  z-1\right)  ^{2}}%
{2(\mu-\lambda)}+\sqrt{\varepsilon}\varrho\label{lambda}%
\end{equation}
and the function $L^{(2)}(\varrho,z)=L^{(1)}(y,z),$ we obtain for
\ $L^{(2)}(\varrho,z),$ \ to leading order, the diffusion equation%
\begin{equation}
\left(  \lambda-\mu\right)  \frac{\partial L^{(2)}}{\partial z}+\frac{\left(
\lambda+\mu\right)  }{\left(  \lambda-\mu\right)  ^{2}}\left(  z-1\right)
^{2}\frac{\partial^{2}L^{(2)}}{\partial\varrho^{2}}=0. \label{L}%
\end{equation}
To solve (\ref{L}), we assume that \ $L^{(2)}(\varrho,z)$ \ is a function of
the similarity variable $V=\frac{\varrho}{r(z)},$ and let $\mathfrak{L}%
(V)=L^{(2)}(\varrho,z),$ where \ $r(z)$ \ is not yet determined. From
(\ref{L}) we get%
\begin{equation}
-\left(  \lambda-\mu\right)  r(z)r^{\prime}(z)V\mathfrak{L}^{\prime}%
+\frac{\left(  \lambda+\mu\right)  }{\left(  \lambda-\mu\right)  ^{2}}\left(
z-1\right)  ^{2}\mathfrak{L}^{\prime\prime}=0 \label{L1}%
\end{equation}
and (\ref{Linf}) gives%
\begin{equation}
\mathfrak{L}(\infty)=1. \label{Linf1}%
\end{equation}
We can eliminate $z$ in (\ref{L1}) by choosing \ $r(z)$ to satisfy the
equation%
\begin{equation}
-\left(  \lambda-\mu\right)  r(z)r^{\prime}(z)=\frac{\left(  \lambda
+\mu\right)  }{\left(  \lambda-\mu\right)  ^{2}}\left(  z-1\right)  ^{2}.
\label{mu1}%
\end{equation}
We choose $r(1)=0,$ which is necessary for matching the transition layer with
the corner layer solution (\ref{cornerint}), and solve (\ref{mu1}) to obtain%
\begin{equation}
r(z)=\sqrt{\frac{2}{3}\frac{\mu+\lambda}{\left(  \mu-\lambda\right)  ^{3}}%
}\left(  z-1\right)  ^{\frac{3}{2}}. \label{mu3}%
\end{equation}

Now (\ref{L1}) and (\ref{Linf1}) become%
\[
\mathfrak{L}^{\prime\prime}=-V\mathfrak{L}^{\prime},\quad\mathfrak{L}%
(\infty)=1
\]
with the solution
\[
\mathfrak{L}(V)=\frac{1}{2}\left[  1+\operatorname{erf}\left(  \frac{V}%
{\sqrt{2}}\right)  \right]  =\frac{1}{\sqrt{2\pi}}%
%TCIMACRO{\dint \limits_{-\infty}^{V}}%
%BeginExpansion
{\displaystyle\int\limits_{-\infty}^{V}}
%EndExpansion
\exp\left(  -\frac{1}{2}u^{2}\right)  du.
\]
Thus, the transition layer solution for \ $y-Y_{0}(z)=O\left(  \sqrt
{\varepsilon}\right)  $ and $1<z$ is%
\begin{equation}
F_{k}(x)\sim\left(  1-\rho\right)  \rho^{k}\frac{1}{2}\left[
1+\operatorname{erf}\left(  \frac{V}{\sqrt{2}}\right)  \right]  ,
\label{Ftran}%
\end{equation}
with%
\[
V(y,z)=\frac{y-Y_{0}(z)}{\sqrt{\varepsilon}}\sqrt{\frac{3}{2}\frac{\left(
\mu-\lambda\right)  ^{3}}{\mu+\lambda}}\left(  z-1\right)  ^{-\frac{3}{2}}.
\]
We can show that if (\ref{Ftran}) is expanded as $V=\frac{y-Y_{0}%
(z)}{r(z)\sqrt{\varepsilon}}\rightarrow-\infty,$ the transition layer matches
to the ray expansion in $R^{C}$, as given by (\ref{G4}), corresponding to rays
emanating from $(0,1).$

\section{The boundary layer at $z=0$}

The ray expansion in (\ref{G3}) does not satisfy the boundary conditions in
(\ref{BC0}). Thus, we re-examine the problem on the scale $z=O(\varepsilon)$
$\left(  k=O(1)\right)  ,$ with $y>0.$ We consider solutions of (\ref{diffeq})
which have the asymptotic form%
\begin{equation}
F_{k}(x)-F_{k}(\infty)=F_{k}^{(3)}(y)-F_{k}(\infty)\sim\varepsilon^{\nu_{3}%
}\exp\left[  \frac{1}{\varepsilon}\Psi(y,0)\right]  K_{k}^{(3)}(y). \label{F2}%
\end{equation}
Using (\ref{F2}) in (\ref{diffeq}) and expanding in powers of \ $\varepsilon$
\ gives, to leading order,%
\[
0=\lambda K_{k-1}^{(3)}+\mu K_{k+1}^{(3)}+\left[  \Psi_{y}(y,0)-\left(
\lambda+\mu\right)  \right]  K_{k}^{(3)},
\]
or, using (\ref{pq}),%
\begin{equation}
0=\lambda K_{k-1}^{(3)}+\mu K_{k+1}^{(3)}+\left[  S(y,0)-\left(  \lambda
+\mu\right)  \right]  K_{k}^{(3)} \label{eq1}%
\end{equation}
and from (\ref{BC0}) we get%
\begin{equation}
\rho K_{-1}^{(3)}=K_{0}^{(3)}. \label{eq11}%
\end{equation}

From (\ref{z(s,t)}) we have, along $z=0,$%
\begin{equation}
\mu\xi(y)+\lambda\xi^{-1}(y)+S(y,0)-\left(  \lambda+\mu\right)  =0,
\label{xi1}%
\end{equation}
where%
\begin{equation}
\xi(y)=\exp\left[  S(y,0)T(y,0)\right]  \label{xsi}%
\end{equation}
and therefore we can rewrite (\ref{eq1}) as
\begin{equation}
\frac{\rho K_{k-1}^{(3)}+K_{k+1}^{(3)}}{K_{k}^{(3)}}=\rho\xi^{-1}(y)+\xi(y).
\label{eq12}%
\end{equation}
Since $S(y,0)<0$ and $T(y,0)>0,$ we see that $0<\xi(y)<1$ for all $y.$ Using
(\ref{xsi}) in (\ref{y(s,t)}), we have%
\[
\mu\xi(y)-\lambda\xi^{-1}(y)+\lambda-\mu-\left(  \lambda+\mu\right)
\ln\left[  \xi(y)\right]  =S^{2}(y,0)y,
\]
which combined with (\ref{xi1}) gives%
\begin{equation}
\left(  1-\xi^{-1}\right)  \rho-\left(  1-\xi\right)  -\left(  \rho+1\right)
\ln\left(  \xi\right)  =\mu\left[  \left(  1-\xi^{-1}\right)  \rho+\left(
1-\xi\right)  \right]  ^{2}y. \label{eqxi}%
\end{equation}

Solving (\ref{eq12}) subject to (\ref{eq11}), we obtain%
\begin{equation}
K_{k}^{(3)}(y)=\left[  \frac{\xi(y)-1}{1-\rho\xi^{-1}(y)}\xi^{k}(y)+\rho
^{k}\xi^{-k}(y)\right]  \overline{K}(y), \label{K4}%
\end{equation}
with $\overline{K}(y)$ to be determined and hence,
\begin{equation}
F_{k}^{(3)}(y)-F_{k}(\infty)\sim\varepsilon^{\nu_{3}}\exp\left[  \frac
{1}{\varepsilon}\Psi(y,0)\right]  \left[  \frac{\xi(y)-1}{1-\rho\xi^{-1}%
(y)}\xi^{k}(y)+\rho^{k}\xi^{-k}(y)\right]  \overline{K}(y). \label{F3}%
\end{equation}

\vspace*{0in}Setting \ $k=z/\varepsilon,$ $F_{k}^{(3)}(y)-F_{k}(\infty
)=G^{(1)}(y,z)$ in (\ref{F3})\ and letting \ $\varepsilon\rightarrow0,$ \ we
get%
\[
G^{(1)}(y,z)\sim\varepsilon^{\nu_{3}}\exp\left\{  \frac{1}{\varepsilon}%
\Psi(y,0)+\frac{1}{\varepsilon}z\ln\left[  \frac{\rho}{\xi(y)}\right]
\right\}  \overline{K}(y)
\]
or, using (\ref{pq}),
\begin{equation}
G^{(1)}(y,z)\sim\varepsilon^{\nu_{3}}\exp\left[  \frac{1}{\varepsilon}%
\Psi(y,0)+\frac{1}{\varepsilon}z\frac{\partial\Psi}{\partial z}(y,0)\right]
\overline{K}(y). \label{F4}%
\end{equation}

From (\ref{psi2}), (\ref{J}), (\ref{G3}) and (\ref{xsi}) we have%
\begin{equation}
G(y,z)-G(\infty,z)\sim\sqrt{\varepsilon}\exp\left[  \frac{1}{\varepsilon}%
\Psi(y,0)+\frac{1}{\varepsilon}z\frac{\partial\Psi}{\partial z}(y,0)\right]
\sqrt{\frac{\mu-\lambda}{2\pi\mathbf{J}_{0}(y)S(y,0)}}\left(  1-\rho\right)  ,
\label{G2}%
\end{equation}
as $z\rightarrow0,$ with%
\begin{align*}
\Psi(y,0)  &  =2yS(y,0)+\ln\left[  \xi(y)\right]  <0,\quad\mathbf{J}%
_{0}(y)=2\left[  \mu\xi(y)-\frac{\lambda}{\xi(y)}\right]  y-1<0,\quad\\
S(y,0)  &  =\left(  \lambda+\mu\right)  -\mu\xi(y)-\lambda\xi^{-1}(y)<0.
\end{align*}
Matching (\ref{F4}) and (\ref{G2}) we conclude that%
\[
\nu_{3}=\frac{1}{2},\quad\overline{K}(y)=\sqrt{\frac{\mu-\lambda}%
{2\pi\mathbf{J}_{0}(y)S(y,0)}}\left(  1-\rho\right)  .
\]
Therefore, for $k=O(1)$ and $y>0,$ we have%
\begin{gather}
F_{k}^{(3)}(y)-F_{k}(\infty)\sim\sqrt{\varepsilon}\exp\left[  \frac
{1}{\varepsilon}\Psi(y,0)\right]  \left(  1-\rho\right) \label{Fz=0}\\
\times\left[  \frac{\xi(y)-1}{1-\rho\xi^{-1}(y)}\xi^{k}(y)+\rho^{k}\xi
^{-k}(y)\right]  \sqrt{\frac{\mu-\lambda}{2\pi\mathbf{J}_{0}(y)S(y,0)}%
},\nonumber
\end{gather}
where $\xi(y)$ is defined implicitly by (\ref{eqxi}).

\section{The boundary $x=0$}

For $x=0$ and $k\leq\left\lfloor c\right\rfloor ,$ the values of $F_{k}(0)$
can be computed from the ray expansion, since $F_{k}(0)-F_{k}(\infty)\sim
\sqrt{\varepsilon}\mathbb{K}(0,z)\exp\left[  \frac{1}{\varepsilon}%
\Psi(0,z)\right]  $ is well defined. For $x=0$ and $k\geq\left\lfloor
c\right\rfloor +1,$ we have $F_{k}(0)=0$ by (\ref{FBC}). We now examine how
this boundary condition is satisfied by considering the scale $y=O(\varepsilon
)$ and $z>1$. Note that this part of the boundary is in the region $R^{C}.$

\vspace*{0in}From (\ref{z(s,t)}) we have%
\begin{equation}
e^{st}=\frac{\left(  z-1\right)  s+\mu+\lambda+\sqrt{\left[  \left(
z-1\right)  s+\mu+\lambda\right]  ^{2}-4\lambda\mu}}{2\mu},\quad z>1.
\label{st1}%
\end{equation}
Using (\ref{st1}) in (\ref{y(s,t)}) we get%
\begin{equation}
S(y,z)=\frac{z-1}{y}+\frac{1}{z-1}\left\{  \left(  \mu+\lambda\right)
\ln\left[  \frac{\mu}{\left(  z-1\right)  ^{2}}y\right]  +2\lambda\right\}
+O\left(  y\ln^{2}y\right)  , \label{S0}%
\end{equation}
and using (\ref{S0}) in (\ref{st1})
\begin{equation}
T(y,z)=\frac{1}{z-1}\ln\left[  \frac{\left(  z-1\right)  ^{2}}{\mu y}\right]
y+O\left(  y^{2}\ln^{2}y\right)  , \label{T00}%
\end{equation}
for $y\rightarrow0^{+}$ and $z>1.$

Using (\ref{S0}) and (\ref{T00}) in (\ref{psi3}) and (\ref{K2}), we find that%
\begin{gather}
\Psi(y,z)\sim\widetilde{\Psi}(y,z)=\ln\left(  \rho\right)  +\left(
z-1\right)  \left\{  \ln\left[  \frac{\lambda y}{\left(  z-1\right)  ^{2}%
}\right]  +2\right\} \label{psi5}\\
+\frac{1}{z-1}\left\{  \left(  \lambda+\mu\right)  \ln\left[  \frac{\mu
y}{\left(  z-1\right)  ^{2}}\right]  +\lambda-\mu\right\}  y,\quad
y\rightarrow0.\nonumber
\end{gather}
Hence, we shall consider asymptotic solutions of the form%
\begin{equation}
F_{k}(x)\sim\varepsilon^{\sigma}\exp\left[  \frac{1}{\varepsilon}%
\widetilde{\Psi}(\varepsilon x,\varepsilon k)\right]  \widetilde
{K}(u,\varepsilon k), \label{F26}%
\end{equation}
where $u=\varepsilon x,$ $u=O(1)$ and $\sigma,\ \widetilde{K}(u,z)$ are to be
determined. Using (\ref{F26}) in (\ref{eqG}) we get, to leading order,%
\begin{equation}
\left(  z-1\right)  \frac{\partial\widetilde{K}}{\partial z}+u\frac
{\partial\widetilde{K}}{\partial u}+\ \widetilde{K}=0. \label{K4eq}%
\end{equation}
The most general solution to (\ref{K4eq}) is%
\begin{equation}
\ \widetilde{K}(u,z)=\frac{1}{u}\widetilde{k}\left(  \Xi\right)  ,\quad
\Xi=\frac{z-1}{u}. \label{K41}%
\end{equation}
Hence,%
\begin{equation}
F_{k}(x)\sim\widetilde{G}(u,z)=\varepsilon^{\sigma}\exp\left[  \frac
{1}{\varepsilon}\widetilde{\Psi}(\varepsilon u,z)\right]  \frac{1}%
{u}\widetilde{k}\left(  \Xi\right)  . \label{F27}%
\end{equation}
To find $\widetilde{k}\left(  \Xi\right)  $ and \ $\sigma$ \ we will match
(\ref{F27}) with the corner layer solution (\ref{cornerint}).

Recalling that $l-\alpha=\frac{z-1}{\varepsilon}$ and using the asymptotic
formula (\ref{asybessel}) we get, as \ $\varepsilon\rightarrow0$ with $\theta$
fixed%
\begin{equation}
J_{\frac{z-1}{\varepsilon}+\frac{\lambda+\mu}{\theta}}\left(  \frac{2\sqrt
{\mu\lambda}}{\theta}\right)  \sim\sqrt{\frac{\varepsilon}{2\pi\left(
z-1\right)  }}\exp\left\{  \left(  \frac{z-1}{\varepsilon}+\frac{\lambda+\mu
}{\theta}\right)  \ln\left[  \frac{\sqrt{\mu\lambda}e\varepsilon}%
{\theta\left(  z-1\right)  }\right]  -\frac{\left(  \lambda+\mu\right)
}{\theta}\right\}  . \label{J1}%
\end{equation}
Using (\ref{J1}) and writing (\ref{cornerint}) in terms of $u=\varepsilon x$
and $z=1+(l-\alpha)\varepsilon,$ we have%
\begin{gather}
F_{l}^{(1)}(x)\sim\left(  1-\rho\right)  \sqrt{\frac{\mu-\lambda}{\mu+\lambda
}}\exp\left[  \left(  \frac{z+1}{2\varepsilon}-\frac{\alpha}{2}\right)
\ln\left(  \rho\right)  \right]  \sqrt{\frac{\varepsilon}{2\pi\left(
z-1\right)  }}\nonumber\\
\times\frac{1}{2\pi\mathrm{i}}%
%TCIMACRO{\dint \limits_{\mathrm{Br}}}%
%BeginExpansion
{\displaystyle\int\limits_{\mathrm{Br}}}
%EndExpansion
\left\{  \frac{1}{\theta}\exp\left[  \frac{u\theta}{\varepsilon}+\left(
\frac{z-1}{\varepsilon}+\frac{\lambda+\mu}{\theta}\right)  \ln\left(
\frac{\sqrt{\mu\lambda}e\varepsilon}{\theta\left(  z-1\right)  }\right)
\right]  \right. \label{F17}\\
\left.  \times\Gamma\left(  \frac{\lambda+\mu}{\theta}+1-\alpha\right)
\exp\left[  \frac{\lambda-\mu}{\theta}-\left(  \frac{\lambda+\mu}{\theta
}-\alpha\right)  \ln\left(  \sqrt{\rho}\ \frac{\lambda+\mu}{\theta}\right)
\right]  \right\}  d\theta,\nonumber
\end{gather}
To evaluate (\ref{F17}) asymptotically as $\varepsilon\rightarrow0$ we shall
use the saddle point method. We find that the integrand has a saddle point at
$\theta=\Xi,$ so that%
\begin{gather*}
F_{l}^{(1)}(x)\sim\varepsilon\left(  1-\rho\right)  \sqrt{\frac{\mu-\lambda
}{\mu+\lambda}}\exp\left[  \left(  \frac{z+1}{2\varepsilon}-\frac{\alpha}%
{2}\right)  \ln\left(  \rho\right)  \right]  \frac{1}{2\pi u}\frac{1}{\Xi}\\
\times\Gamma\left(  \frac{\lambda+\mu}{\Xi}+1-\alpha\right)  \exp\left[
\frac{u\Xi}{\varepsilon}+\left(  \frac{z-1}{\varepsilon}+\frac{\lambda+\mu
}{\Xi}\right)  \ln\left(  \frac{\sqrt{\mu\lambda}e\varepsilon}{\Xi\left(
z-1\right)  }\right)  \right] \\
\times\exp\left[  \frac{\lambda-\mu}{\Xi}-\left(  \frac{\lambda+\mu}{\Xi
}-\alpha\right)  \ln\left(  \sqrt{\rho}\ \frac{\lambda+\mu}{\Xi}\right)
\right]  ,
\end{gather*}
or%
\begin{gather}
F_{l}^{(1)}(x)\sim\varepsilon\left(  1-\rho\right)  \sqrt{\frac{1-\rho}%
{1+\rho}}\frac{1}{2\pi u}\frac{1}{\Xi}\Gamma\left(  \frac{\lambda+\mu}{\Xi
}+1-\alpha\right) \nonumber\\
\times\exp\left[  \frac{\ln\left(  \rho\right)  }{\varepsilon}+\frac{u\Xi
}{\varepsilon}\ln\left(  \frac{\lambda e^{2}\varepsilon}{\Xi^{2}u}\right)
+\alpha\ln\left(  \ \frac{\lambda+\mu}{\Xi}\right)  \right] \label{F188}\\
\times\exp\left\{  \frac{\lambda+\mu}{\Xi}\ln\left[  \frac{\varepsilon}{\Xi
u\left(  \rho+1\right)  }\right]  +\frac{2\lambda}{\Xi}\right\}  .\nonumber
\end{gather}

Writing (\ref{F27}) in terms of $\Xi$, we obtain%
\begin{align}
\widetilde{G}(u,u\Xi+1)  &  =\varepsilon^{\sigma}\exp\left[  \frac{\ln\left(
\rho\right)  }{\varepsilon}+\frac{u\Xi}{\varepsilon}\ln\left(  \frac{\lambda
e^{2}\varepsilon}{u\Xi^{2}}\right)  \right] \label{F19}\\
&  \times\exp\left[  \frac{\left(  \lambda+\mu\right)  }{\Xi}\ln\left(
\frac{\mu\varepsilon}{u\Xi^{2}}\right)  +\frac{\left(  \lambda-\mu\right)
}{\Xi}\right]  \frac{1}{u}\widetilde{k}\left(  \Xi\right)  .\nonumber
\end{align}
Matching (\ref{F188}) with (\ref{F19}), we have \
\begin{align*}
\widetilde{k}\left(  \Xi\right)   &  =\left(  1-\rho\right)  \sqrt
{\frac{1-\rho}{1+\rho}}\frac{1}{2\pi}\frac{1}{\Xi}\Gamma\left(  \frac
{\lambda+\mu}{\Xi}+1-\alpha\right) \\
&  \times\exp\left[  \alpha\ln\left(  \ \frac{\lambda+\mu}{\Xi}\right)
+\frac{\lambda+\mu}{\Xi}\ln\left(  \frac{e\Xi}{\lambda+\mu}\right)  \right]
\end{align*}
and \ $\sigma=1.$ Therefore, for $1<z,$
\begin{gather*}
\widetilde{G}(u,u\Xi+1)=\varepsilon\left(  1-\rho\right)  \sqrt{\frac{1-\rho
}{1+\rho}}\frac{1}{2\pi}\frac{1}{\Xi}\frac{1}{u}\Gamma\left(  \frac
{\lambda+\mu}{\Xi}+1-\alpha\right) \\
\times\exp\left[  \frac{\ln\left(  \rho\right)  }{\varepsilon}+\frac{u\Xi
}{\varepsilon}\ln\left(  \frac{\lambda e^{2}\varepsilon}{\Xi^{2}u}\right)
+\alpha\ln\left(  \ \frac{\lambda+\mu}{\Xi}\right)  \right] \\
\times\exp\left\{  \frac{\lambda+\mu}{\Xi}\ln\left[  \frac{\varepsilon}{\Xi
u\left(  \rho+1\right)  }\right]  +\frac{2\lambda}{\Xi}\right\}  ,
\end{gather*}
or%
\begin{gather}
\widetilde{G}(u,z)=\varepsilon\left(  1-\rho\right)  \sqrt{\frac{1-\rho
}{1+\rho}}\frac{1}{2\pi}\frac{1}{z-1}\Gamma\left[  \frac{\left(  \lambda
+\mu\right)  u}{z-1}+1-\alpha\right] \nonumber\\
\times\exp\left\{  \frac{\ln\left(  \rho\right)  }{\varepsilon}+\frac
{z-1}{\varepsilon}\ln\left[  \frac{\lambda e^{2}\varepsilon u}{\left(
z-1\right)  ^{2}}\right]  +\alpha\ln\ \left[  \frac{\left(  \lambda
+\mu\right)  u}{z-1}\right]  \right\} \label{F20}\\
\times\exp\left\{  \frac{\left(  \lambda+\mu\right)  u}{z-1}\ln\left[
\frac{\varepsilon}{\left(  \rho+1\right)  \left(  z-1\right)  }\right]
+\frac{2\lambda u}{z-1}\right\}  ,\nonumber
\end{gather}
Note that from (\ref{F20}) we have $\widetilde{G}(u,\varepsilon k)=O\left(
u^{k-\left\lfloor c\right\rfloor }\right)  ,$ as $u\rightarrow0,$
$k\geq\left\lfloor c\right\rfloor +1.$

\section{The marginal distribution}

We will now find the equilibrium probability that the buffer content exceeds
$x,$%
\begin{equation}
M(x)=\Pr\left[  X(\infty)>x\right]  =1-%
%TCIMACRO{\dsum \limits_{k=0}^{\infty}}%
%BeginExpansion
{\displaystyle\sum\limits_{k=0}^{\infty}}
%EndExpansion
F_{k}(x) \label{M}%
\end{equation}
for various ranges of $x.$

\subsection{Approximation for $x=O(1)$}

In this region we shall use the spectral representation of the corner layer
solution. Using the generating function%
\[%
%TCIMACRO{\dsum \limits_{i=-\infty}^{\infty}}%
%BeginExpansion
{\displaystyle\sum\limits_{i=-\infty}^{\infty}}
%EndExpansion
J_{i}(x)z^{i}=\exp\left[  \frac{x}{2}\left(  z-\frac{1}{z}\right)  \right]  ,
\]
in the form%
\[
\exp\left[  \frac{1-\rho}{\rho+1}(j+1-\alpha)\right]  =\left(  \sqrt{\rho
}\right)  ^{-(j+1)}%
%TCIMACRO{\dsum \limits_{l=-\infty}^{\infty}}%
%BeginExpansion
{\displaystyle\sum\limits_{l=-\infty}^{\infty}}
%EndExpansion
J_{l-(j+1)}\left[  -\frac{2\sqrt{\rho}}{\rho+1}(j+1-\alpha)\right]  \left(
\sqrt{\rho}\right)  ^{l},
\]
we obtain from (\ref{cornerspec})
\begin{align}
M(x)  &  \sim M^{(1)}(x)=\left(  1-\rho\right)  \rho^{c-\alpha+1}\sqrt
{\frac{1-\rho}{1+\rho}}%
%TCIMACRO{\dsum \limits_{j\geq0}}%
%BeginExpansion
{\displaystyle\sum\limits_{j\geq0}}
%EndExpansion
\frac{\left(  j+1-\alpha\right)  ^{j}}{j!}\rho^{j}\label{M1}\\
&  \times\exp\left[  -\frac{x\left(  \lambda+\mu\right)  }{j+1-\alpha}%
+\frac{1-3\rho}{\rho+1}(j+1-\alpha)\right]  .\nonumber
\end{align}
$\ \ \ \ \ \ $

\subsection{Approximation for $x=O(\varepsilon^{-2})=O\left(  c^{2}\right)  $}

We shall now use the asymptotic solution in the region $R,$ as given by
(\ref{G3}). We have
\begin{equation}
M(x)\sim M^{(2)}(y)=-%
%TCIMACRO{\dsum \limits_{k=0}^{\infty}}%
%BeginExpansion
{\displaystyle\sum\limits_{k=0}^{\infty}}
%EndExpansion
G\left(  y,k\varepsilon\right)  \sim-\frac{1}{\sqrt{\varepsilon}}%
%TCIMACRO{\dint \limits_{0}^{\infty}}%
%BeginExpansion
{\displaystyle\int\limits_{0}^{\infty}}
%EndExpansion
\exp\left[  \frac{1}{\varepsilon}\Psi(y,z)\right]  \mathbb{K(}y,z)dz.
\label{Mar1}%
\end{equation}
To evaluate (\ref{Mar1}) as $\varepsilon\rightarrow0,$ we use the Laplace
method. From (\ref{pq}) we get%
\begin{equation}
\Psi_{z}(y,z)=q=0\quad\Leftrightarrow\quad st=\ln\left(  \rho\right)
\label{st11}%
\end{equation}
and therefore the main contribution to (\ref{Mar1}) comes from $z=1$ and we
obtain%
\[
M^{(2)}(y)\sim-\frac{\sqrt{2\pi}}{\sqrt{-\Psi_{zz}(y,1)}}\exp\left[  \frac
{1}{\varepsilon}\Psi(y,1)\right]  \mathbb{K(}y,1).
\]
Using (\ref{st1}) in (\ref{J})-(\ref{qz}) and (\ref{psi3})-(\ref{K2}), we
obtain%
\begin{align}
\Psi_{zz}(y,1)  &  =\frac{S(y,1)}{\mu-\lambda},\quad\mathbb{K(}y,1)=\frac
{1-\rho}{2S(y,1)}\sqrt{\frac{-S(y,1)}{\pi y}},\label{gauss}\\
\Psi(y,1)  &  =2yS(y,1)+\ln\left(  \rho\right)  ,\quad\nonumber
\end{align}
while (\ref{y(s,t)}) gives%
\begin{equation}
S(y,1)=-\frac{\zeta}{\sqrt{y}}, \label{S(1)}%
\end{equation}
with%
\[
\zeta=\sqrt{2\left(  \lambda-\mu\right)  -\left(  \lambda+\mu\right)
\ln\left(  \rho\right)  }.
\]
Thus,%
\begin{equation}
M^{(2)}(y)\sim\sqrt{\frac{\mu-\lambda}{2}}\frac{1-\rho}{\zeta}\exp\left[
\frac{1}{\varepsilon}(-2\zeta\sqrt{y}+\ln\rho)\right]  . \label{M2}%
\end{equation}
$\ \ \ \ \ $

\section{Summary and discussion}

In most of the domain $\mathfrak{D=}\left\{  (y,z):y,z\geq0\right\}  ,$ the
asymptotic expansion of $F_{k}(x)=G(y,z)$ is given by%

\begin{equation}
G(y,z)\sim\sqrt{\varepsilon}\exp\left[  \frac{1}{\varepsilon}\Psi(y,z)\right]
\mathbb{K}(y,z)\text{ \ in }R^{C} \label{RC1}%
\end{equation}
or%
\begin{equation}
G(\infty,z)-G(y,z)\sim-\sqrt{\varepsilon}\exp\left[  \frac{1}{\varepsilon}%
\Psi(y,z)\right]  \mathbb{K}(y,z)\text{ \ in }R. \label{R1}%
\end{equation}
If we consider the continuous part of the density, given by%
\[
f_{k}(x)=F_{k}^{\prime}(x)=\varepsilon^{2}\frac{\partial G}{\partial
y}(y,z),\quad x>0,
\]
the transition between $R$ and $R^{C}$ disappears, and we have%
\begin{equation}
f_{k}(x)\sim\varepsilon^{\frac{3}{2}}\Psi_{y}(y,z)\exp\left[  \frac
{1}{\varepsilon}\Psi(y,z)\right]  \mathbb{K}(y,z)=\varepsilon^{\frac{3}{2}%
}\exp\left[  \frac{1}{\varepsilon}\psi(s,t)\right]  sK(s,t), \label{density}%
\end{equation}
everywhere in the interior of$\ \mathfrak{D.}$ Note that $\mathbb{K}(y,z)$
becomes infinite along $y=Y_{0}(z)$ (i.e., $s=0),$ but the product $\Psi
_{y}(y,z)\mathbb{K}(y,z)$ remains finite.

The asymptotic expansion of the boundary probabilities $F_{k}(0),$
$k\leq\left\lfloor c\right\rfloor $ can be obtained by setting $y=0$ in
(\ref{R1}). This expression can be used to estimate the difference
\[
F_{k}(\infty)-F_{k}(0)=\Pr\left[  X(\infty)>0,\quad Z(\infty)=k=\frac
{z}{\varepsilon}\right]
\]
which is exponentially small for $\varepsilon\rightarrow0.$ Also, for a fixed
$z\in\lbrack0,1),$ $f_{k}(x)$ is maximal at $x=0$ (see Figure \ref{y=0}). In
other words, if $k<c,$ the buffer will most likely be empty.

For a fixed $z>1,$ $f_{k}(x)$ is peeked along the curve $y=Y_{0}(z)$ (see
Figure \ref{y=y0}). To see this better, we can use (\ref{T0}),
(\ref{inversion}) and (\ref{J}) in (\ref{density}), obtaining%
\[
f_{k}(x)\sim\varepsilon^{\frac{3}{2}}\frac{1-\rho}{\sqrt{2\pi}}\sqrt{\chi
(z)}\exp\left\{  \frac{1}{\varepsilon}\left[  z\ln\left(  \rho\right)
-\chi(z)\left(  y-Y_{0}\right)  ^{2}\right]  \right\}  ,\quad z>1
\]
with%
\[
\chi(z)=\frac{3\left(  \mu-\lambda\right)  ^{3}}{\left(  \mu+\lambda\right)
\left(  z-1\right)  ^{3}},
\]
or equivalently%
\[
f_{k}(x)\sim\left(  1-\rho\right)  \rho^{k}\sqrt{\frac{\chi(\frac{k}{c})}{2\pi
c^{3}}}\exp\left\{  -\frac{\chi(\frac{k}{c})}{c^{3}}\left[  x-c^{2}%
Y_{0}\left(  \frac{k}{c}\right)  \right]  ^{2}\right\}  ,\quad z>1.
\]
This means that given $k>c$ active sources, the most likely value of the
buffer will be
\[
x=c^{2}Y_{0}\left(  \frac{k}{c}\right)  .
\]

\begin{figure}[ptb]
\begin{center}
\rotatebox{270} {\resizebox{10cm}{!}{\includegraphics{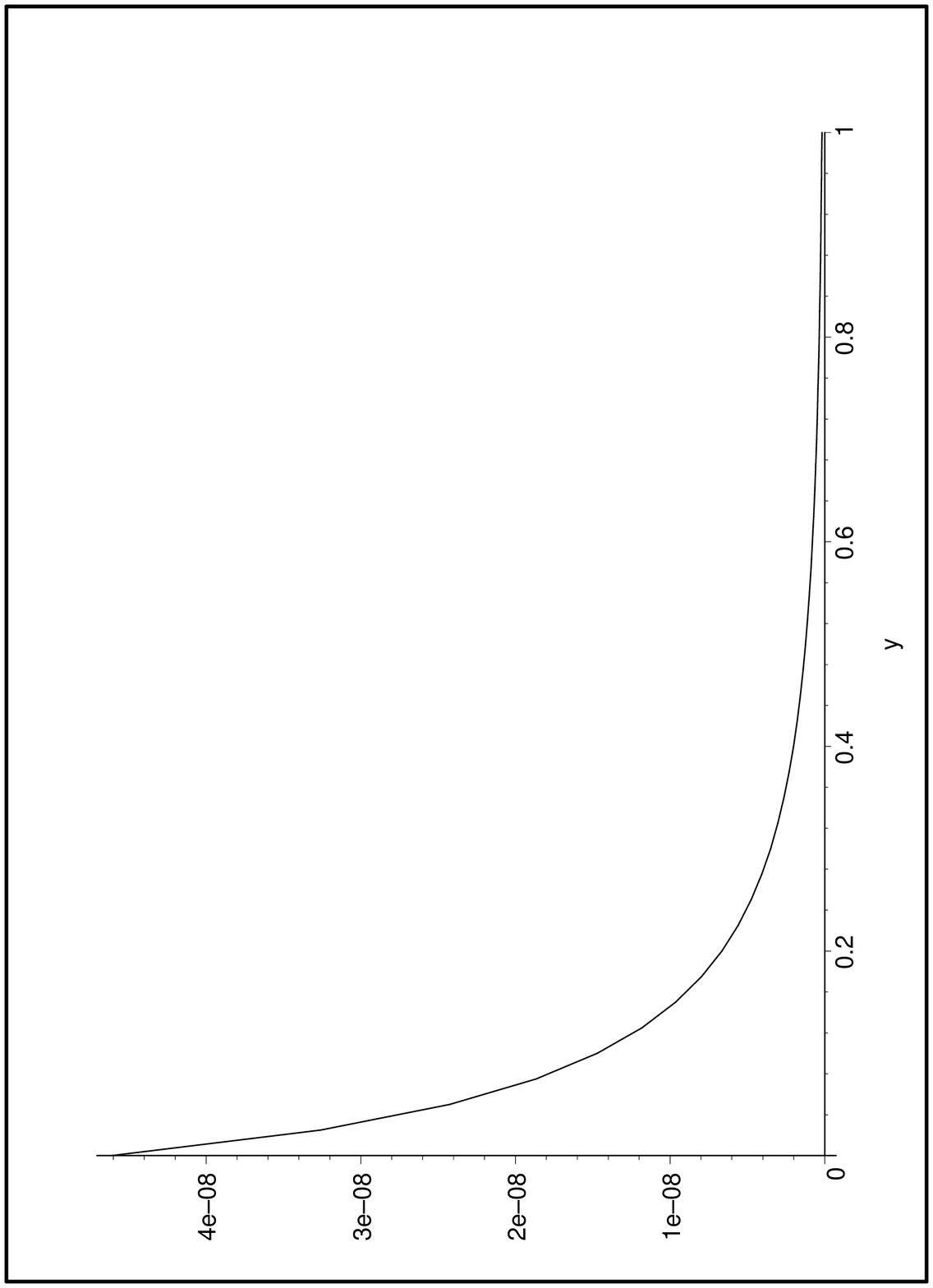}}}
\end{center}
\caption{A plot of $\varepsilon^{\frac{3}{2}}\Psi_{y}(y,z)\exp\left[  \frac
{1}{\varepsilon}\Psi(y,z)\right]  \mathbb{K}(y,z)$, with $\varepsilon
=0.1,\lambda=0.3145, \mu=0.8473$ and $z=0.5$.}%
\label{y=0}%
\end{figure}

\begin{figure}[ptb]
\begin{center}
\rotatebox{270} {\resizebox{10cm}{!}{\includegraphics{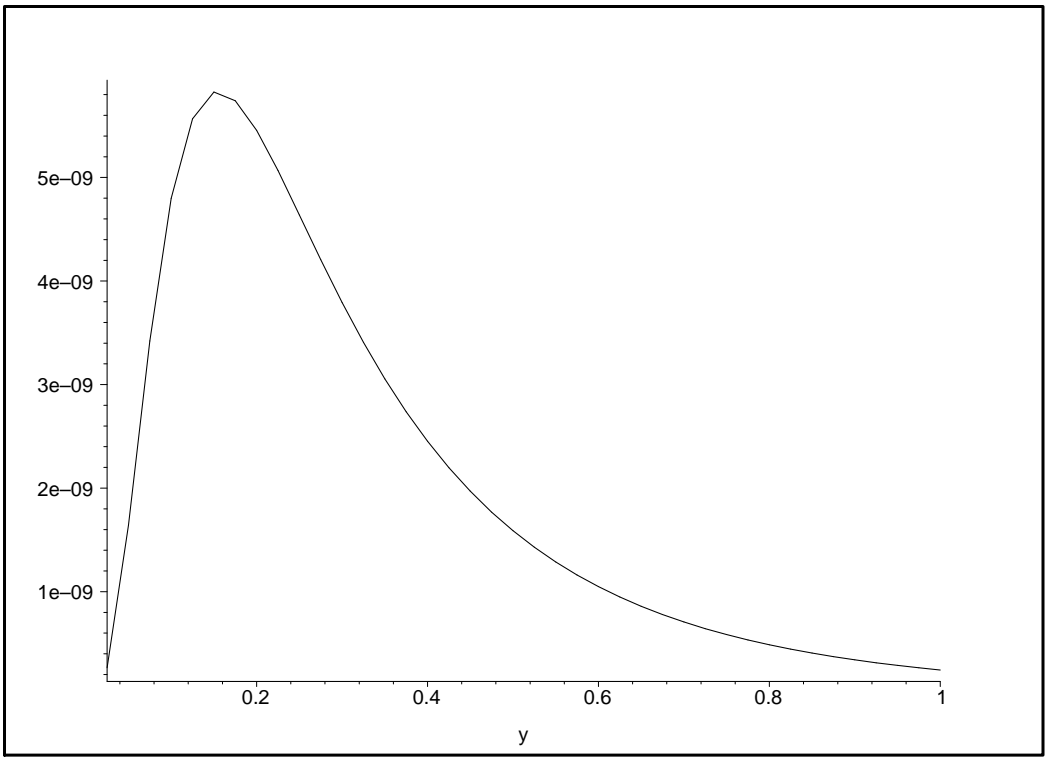}}}
\end{center}
\caption{A plot of $\varepsilon^{\frac{3}{2}}\Psi_{y}(y,z)\exp\left[  \frac
{1}{\varepsilon}\Psi(y,z)\right]  \mathbb{K}(y,z)$, with $\varepsilon
=0.1,\lambda=0.3145, \mu=0.8473$ and $z=1.5$.}%
\label{y=y0}%
\end{figure}

For a fixed $x\geq0,$ $f_{k}(x)$ achieves its maximum around $z=1$ (see Figure
\ref{z=1}). To find an expression for $f_{k}(x)$ when $z$ is close to $1,$ we
use (\ref{gauss}) and obtain, for fixed $y>0,$%
\[
f_{k}(x)\sim\varepsilon^{\frac{3}{2}}\frac{1-\rho}{2}\sqrt{\frac{\zeta}{\pi}%
}y^{-\frac{3}{4}}\exp\left\{  \frac{1}{\varepsilon}\left[  \ln\rho-2\zeta
\sqrt{y}-\frac{\zeta\left(  z-1\right)  ^{2}}{2\sqrt{y}\left(  \mu
-\lambda\right)  }\right]  \right\}  ,
\]
or%
\[
f_{k}(x)\sim\frac{1}{2}\left(  1-\rho\right)  \rho^{c}\sqrt{\frac{\zeta}{\pi}%
}x^{-\frac{3}{4}}\exp\left[  -2\zeta\sqrt{x}-\frac{\zeta\left(  k-c\right)
^{2}}{2\sqrt{x}\left(  \mu-\lambda\right)  }\right]  .
\]

\begin{figure}[ptb]
\begin{center}
\rotatebox{270} {\resizebox{10cm}{!}{\includegraphics{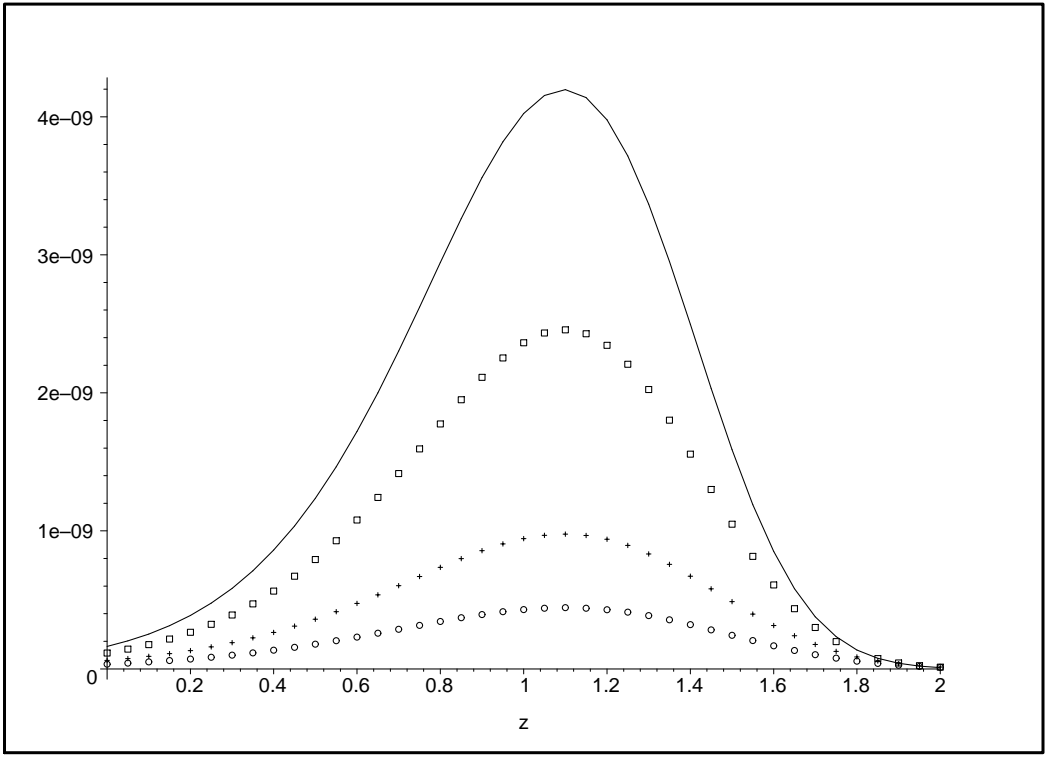}}}
\end{center}
\caption{A plot of $\varepsilon^{\frac{3}{2}}\Psi_{y}(y,z)\exp\left[  \frac
{1}{\varepsilon}\Psi(y,z)\right]  \mathbb{K}(y,z)$, with $\varepsilon
=0.1,\lambda=0.3145, \mu=0.8473$ for $y=0.5$ (solid line), $y=0.6$
($\square),$ $y=0.8$ (+++) and $y=1$ (ooo).}%
\label{z=1}%
\end{figure}

Below we summarize the various boundary, corner and transition layer
corrections to the results in (\ref{G3}) and (\ref{G4}):

\begin{enumerate}
\item $k=l+c-\alpha,$\quad$x=O(1):$%
\begin{align*}
F_{l}^{(1)}(x)  &  =\left(  1-\rho\right)  \sqrt{\frac{\mu-\lambda}%
{\mu+\lambda}}\rho^{c-\alpha+\frac{l}{2}}\\
&  \times\frac{1}{2\pi\mathrm{i}}%
%TCIMACRO{\dint \limits_{\mathrm{Br}}}%
%BeginExpansion
{\displaystyle\int\limits_{\mathrm{Br}}}
%EndExpansion
e^{x\theta}\frac{1}{\theta}\Gamma\left(  \frac{\lambda+\mu}{\theta}%
+1-\alpha\right)  J_{l-\alpha+\frac{\lambda+\mu}{\theta}}\left(  \frac
{2\sqrt{\mu\lambda}}{\theta}\right)  \exp\left[  \Lambda(\theta)\right]
d\theta,
\end{align*}
where $J_{\cdot}(\cdot)$ denotes the Bessel function, $\Gamma\left(
\cdot\right)  $ the Gamma function, $\mathrm{Br}$ is a vertical contour in the
complex plane with $\operatorname{Re}(s)>0$ and
\[
\alpha=c-\left\lfloor c\right\rfloor \in(0,1),\quad\rho=\frac{\lambda}{\mu
}<1,\quad\Lambda(\theta)=\frac{2\lambda}{\theta}-\left(  \frac{\lambda+\mu
}{\theta}-\alpha\right)  \ln\left[  \sqrt{\rho}\ \frac{\lambda+\mu}{\theta
}\right]  .
\]

\item $y-Y_{0}(z)=O\left(  \sqrt{\varepsilon}\right)  ,\quad1<z:$%
\[
F_{k}(x)\sim\left(  1-\rho\right)  \rho^{k}\frac{1}{2}\left[
1+\operatorname{erf}\left(  \frac{V}{\sqrt{2}}\right)  \right]  ,
\]
with%
\[
V(y,z)=\frac{y-Y_{0}(z)}{\sqrt{\varepsilon}}\sqrt{\frac{3}{2}\frac{\left(
\mu-\lambda\right)  ^{3}}{\mu+\lambda}}\left(  z-1\right)  ^{-\frac{3}{2}%
},\quad Y_{0}(z)=\frac{\left(  z-1\right)  ^{2}}{2\left(  \mu-\lambda\right)
},\quad1<z
\]

\item $k=O(1)$%
\begin{gather*}
F_{k}^{(3)}(y)-F_{k}(\infty)\sim\sqrt{\varepsilon}\exp\left[  \frac
{1}{\varepsilon}\Psi(y,0)\right] \\
\times\left[  \frac{\xi(y)-1}{1-\rho\xi^{-1}(y)}\xi^{k}(y)+\rho^{k}\xi
^{-k}(y)\right]  \sqrt{\frac{\mu-\lambda}{2\pi\mathbf{J}_{0}(y)S(y,0)}}\left(
1-\rho\right)  ,
\end{gather*}%
\begin{gather*}
\Psi(y,0)=2yS(y,0)+\ln\left[  \xi(y)\right]  <0,\quad\mathbf{J}_{0}%
(y)=2\left[  \mu\xi(y)-\frac{\lambda}{\xi(y)}\right]  y-1<0,\\
\quad S(y,0)=\left(  \lambda+\mu\right)  -\mu\xi(y)-\lambda\xi^{-1}%
(y)<0,\quad\\
\left(  1-\xi^{-1}\right)  \rho-\left(  1-\xi\right)  -\left(  \rho+1\right)
\ln\left(  \xi\right)  =\mu\left[  \left(  1-\xi^{-1}\right)  \rho+\left(
1-\xi\right)  \right]  ^{2}y.
\end{gather*}

\item $y=\varepsilon u,\quad u=O(1),\quad1<z$%
\begin{gather*}
F_{k}(x)\sim\varepsilon\left(  1-\rho\right)  \sqrt{\frac{1-\rho}{1+\rho}%
}\frac{1}{2\pi}\frac{1}{z-1}\Gamma\left[  \frac{\left(  \lambda+\mu\right)
u}{z-1}+1-\alpha\right] \\
\times\exp\left\{  \frac{\ln\left(  \rho\right)  }{\varepsilon}+\frac
{z-1}{\varepsilon}\ln\left[  \frac{\lambda e^{2}\varepsilon u}{\left(
z-1\right)  ^{2}}\right]  +\alpha\ln\ \left[  \frac{\left(  \lambda
+\mu\right)  u}{z-1}\right]  \right\} \\
\times\exp\left\{  \frac{\left(  \lambda+\mu\right)  u}{z-1}\ln\left[
\frac{\varepsilon}{\left(  \rho+1\right)  \left(  z-1\right)  }\right]
+\frac{2\lambda u}{z-1}\right\}  .
\end{gather*}

\end{enumerate}

\section*{Acknowledgements}

This work was completed while D. Dominici was visiting Technische
Universit\"{a}t Berlin and supported in part by a Sofja Kovalevskaja Award
from the Humboldt Foundation, provided by Professor Olga Holtz. He wishes to
thank Olga for her generous sponsorship and his colleagues at TU Berlin for
their continuous help.

The work of C. Knessl was partly supported by NSF grant DMS 05-03745.

\vspace*{0in}

\end{document}